\documentclass[12pt,reqno]{amsart}

\usepackage[T2A]{fontenc}
\usepackage{mathtext}
\usepackage[cp1251]{inputenc}
\usepackage[english]{babel}
\usepackage{amsmath,amsxtra,amssymb,eufrak}
\usepackage[all]{xy}
\usepackage{mathrsfs}
\usepackage{paralist}
\usepackage[active]{srcltx} 

\textwidth=160mm
\textheight=230mm
\topmargin=-2mm
\oddsidemargin=5mm
\evensidemargin=5mm

\newtheorem{thm}{Theorem}[section]
\newtheorem{lm}[thm]{Lemma}
\newtheorem{co}[thm]{Corollary}
\newtheorem{pr}[thm]{Proposition}

\theoremstyle{definition}

\newtheorem{rem}[thm]{Remark}
\newtheorem{rems}[thm]{Remarks}

\numberwithin{equation}{section}

\title{Holomorphic reflexivity for locally finite and profinite groups: the Abelian and general cases}
\author{O.\,Yu.~Aristov}
\email{aristovoyu@inbox.ru}

\begin{document}

\begin{abstract}
Akbarov's theory of holomorphic reflexivity for topological Hopf algebras has been
developed in two directions, namely, by the complication of definitions when expanding the scope
and by their simplification when restricting. In the framework of the latter approach, we establish the
holomorphic reflexivity for topological Hopf algebras associated with locally finite countable groups
and second-countable profinite groups. In the Abelian case, the reflexivity is described in terms close
to the classical ones.
\end{abstract}

\maketitle
\markright{Holomorphic reflexivity}

In the paper \cite{Ak08}, Akbarov presented a holomorphic version of the Pontryagin duality and showed that
an analogue of the classical result on reflexivity is valid for compactly generated Abelian Stein groups.
He also expanded the context by considering the holomorphic duality for some class of topological Hopf
algebras, which includes algebras of holomorphic functions on general (not only Abelian) compactly
generated Stein groups, and also convolution algebras of exponential analytic functionals. The fact that
there is no need to either prove or assume the existence of the Haar measure or its noncommutative
analogues is an accomplishment of this theory. However, there are obstacles to reflexivity (for example,
in some form, the linearity of a group is a necessary condition; see \cite{Ar20+}). Thus, the question of which
topological Hopf algebras are holomorphically reflexive arises.

The holomorphic duality theories that we are discussing can be described with the help of \emph{reflexivity
diagrams}  of the form
\begin{equation*}
  \xymatrix{
H \ar@{|->}[d]&&(H^\bullet)'\ar@{|->}[ll]_{env}\\
H' \ar@{|->}[rr]_{env}&&H^\bullet\,,\ar@{|->}[u]
 }
\end{equation*}
where $H$ is an object of some category of topological Hopf algebras, the vertical arrows correspond
to some duality functor for topological vector spaces, and the horizontal ones correspond to some
envelope functor (left adjoint to some forgetful functor between categories of topological algebras).  If the
Hopf algebra structure is preserved under all these operations, it can be said that $H^\bullet$ is \emph{dual} to~$H$. If, in addition, the diagram is commutative (up to functor isomorphism), i.e.,  $H^{\bullet\bullet}\to H$ is a functor
isomorphism, then we can say what $H$ is \emph{reflexive}.

We take for $H$ the following topological Hopf algebras: the function algebras $\mathbb{C}^\Gamma$ and $\mathcal{O}(G)$ and also the group algebra  $\mathbb{C}\Gamma$ and the convolution algebra $\mathcal{O}(G)'$, where $\Gamma$ is a locally finite group and $G$ is a profinite group (here $\mathcal{O}(G)$ stands for the set of locally constant functions on $G$; we call them ``holomorphic,''  see Lemma~\ref{holloco}, and the prime denotes the strong dual space). In this paper, we prove the reflexivity of all these Hopf algebras within the framework of the approach of~\cite{Ar20+}.

The main difficulty in the scheme described above is the requirement that both the operations involved
in the definition of $H^\bullet$ must be functors between some categories of topological Hopf algebras. In this
paper, as well as in~\cite{Ar20+}, we consider Hopf algebras in the symmetric monoidal category of complete locally
convex spaces with the bifunctor $(-)\mathbin{\widehat{\otimes}} (-)$  of complete projective tensor product (Hopf $\mathbin{\widehat{\otimes}}$-algebras)  and
the functor of strong dual to a locally convex space. At the same time, in \cite{Ak08},  the spaces, their duals, and
tensor products are considered in the framework of the theory of stereotype spaces. The other approach
to definitions chosen in~\cite{Ar20+} is motivated by the fact that Akbarov's construction is rather complicated
technically and can be simplified under appropriate conditions on the topology of the underlying spaces.
Moreover, both approaches give the same result for connected Lie groups considered in~\cite{Ar20+}; see, e.g., \cite[Remark 2.9]{Ar20a}  (this is apparently not the case in the general situation).

Consider first the simplest case in which $G$ is a compactly generated Abelian Lie group. The
statement about the \emph{holomorphic reflexivity} can be described as the commutativity of the diagram

$$
  \xymatrix{
\mathcal{O}(G) \ar@{|->}[d]&&\mathcal{O}(G^\bullet)'\ar@{|->}[ll]_{Fourier}\\
\mathcal{O}(G)'  \ar@{|->}[rr]_{Fourier}&&\mathcal{O}(G^\bullet)\,,\ar@{|->}[u]
 }
$$
where $\mathcal{O}(G)$  is the set of holomorphic functions on $G$, the horizontal arrows are the Fourier transform,
and $G^\bullet$  stands for the group of holomorphic characters of $G$ (see Sec.\,\ref{absect}). A~remarkable observation made
in \cite{Ak08} is that, in this case, the Fourier transform coincides with the Arens--Michael envelope functor (the
completion with respect to the topology given by all continuous submultiplicative prenorms). Thus, we
can write the diagram in a form suitable not only for Abelian groups:
$$
  \xymatrix{
\mathcal{O}(G) \ar@{|->}[d]&&\widehat{{\mathscr A}}(G)'\ar@{|->}[ll]_{AM\,env}\\
{\mathscr A}(G) \ar@{|->}[rr]_{AM\,env}&&\widehat{{\mathscr A}}(G)\,,\ar@{|->}[u]
 }
$$
where ${\mathscr A}(G)=\mathcal{O}(G)'$, and the ``hat''
denotes the Arens--Michael envelope. Studying the commutativity
question for this diagram, in other words, the question of the \emph{holomorphic reflexivity} of $\mathcal{O}(G)$  in the
case of compactly generated Stein group (not necessarily Abelian) was started in \cite{Ak08}. For connected $G$,
the following answer was obtained in \cite{Ar20+}: \emph{$\mathcal{O}(G)$ is holomorphically reflexive if and only if $G$ is linear}.

Note that, for the class of Hopf $\mathbin{\widehat{\otimes}}$-algebras and in the general situation, there are no problems
when choosing the Arens--Michael envelope as the second functor in the definition, since, by \cite[Proposition~6.7]{Pir_stbflat},  we cannot leave this class by using the envelope functor.

In the present paper, we discuss the case of a zero-dimensional, i.e., discrete group~$G$. The algebra $\mathcal{O}(G)$ can be identified with the algebra $\mathbb{C}^G$  of all functions, and the reflexivity diagram takes the form
$$
  \xymatrix{
\mathbb{C}^G \ar@{|->}[d]&&(\widehat{\mathbb{C} G})'\ar@{|->}[ll]_{AM\,env}\\
\mathbb{C} G \ar@{|->}[rr]_{AM\,env}&&\widehat{\mathbb{C} G}\,.\ar@{|->}[u]
 }
$$
If $G$ is finitely generated (equivalently, compactly generated), then $\mathbb{C}^G$ is holomorphically reflexive (the
assertion is implicitly contained in~\cite{Ak08}). This is connected with the fact that, in this case, both the
algebras $\mathbb{C}^G$ and $\widehat{\mathbb{C} G}$ are nuclear Fr\'echet spaces; this is a situation in which the functor $H\mapsto H'$ is
compatible with the structure of Hopf $\mathbin{\widehat{\otimes}}$-algebra. It was discovered in  \cite{AHHFG} that $\widehat{\mathbb{C} G}$  need not be a Fr\'echet
space; moreover, the condition that G is finitely generated is necessary and sufficient. There are grounds
to assume that, in the general case, the topology on $\widehat{\mathbb{C} G}$  may have pathologies that can violate the
compatibility of the functor  $H\mapsto H'$ with the projective tensor product. Since we are interested in
arbitrary countable groups, it is important to understand how our duality scheme works for $H$ that are
not Fr\'echet spaces.

There are at least two possible ways of generalization here. The first way is in the modification
of the scheme itself within the framework of the theory of stereotype spaces (as in~\cite{Ak08}).  This version was developed in~\cite{Ak20+}; see below. In this paper, an alternative approach is chosen; we consider a wider
class of Hopf $\mathbin{\widehat{\otimes}}$-algebras for which the scheme works practically without modifications. This uses the fact that a Hopf $\mathbin{\widehat{\otimes}}$-algebra whose underlying space is a nuclear Fr\'echet space belongs to the class of
\emph{well-behaved} Hopf $\mathbin{\widehat{\otimes}}$-algebras, which, by definition, also includes the algebras with the underlying
nuclear (DF)-space. This concept was introduced in \cite{BFGP}; see the details in Sec.\,\ref{nonab} and a more detailed
discussion in~\cite{Ar20+}. The ground for distinguishing this class is the following property: if $H$ is a well-behaved
Hopf $\mathbin{\widehat{\otimes}}$-algebra, then $H'$ is also a well-behaved Hopf $\mathbin{\widehat{\otimes}}$-algebra (with respect to operations defined in
a dual way). Thus, the general scheme of~\cite{Ar20+}  admits an extension to the class of underlying spaces
opposite in properties to Fr\'echet spaces, namely, to (DF)-spaces. Moreover, as is proved in \cite{Ar21+},  the
Arens--Michael envelopes of group algebras of locally finite groups (which are opposite in properties to
finitely generated ones) are (DF)-spaces. This justifies the interest in locally finite groups in this context.
On the other hand, as is known, in the framework of the classical Pontryagin theory, the group dual to
a locally finite Abelian group is profinite (see, e.g., \cite[Theorem 2.9.6]{RZ10}). Therefore, the natural next step
is to include also profinite groups into consideration.

In the opinion of the author, it is expedient to consider the Abelian case separately, in view of the
fact that this part of the theory can be developed without involving Hopf algebras. For this reason,
we consider the holomorphic duality between locally finite and profinite groups in the Abelian case in
terms of characters and verify that it has exactly the same form as in the classical case in Sec.\,\ref{absect}. The
main results are contained in Sec.\,\ref{nonab}. We prove the holomorphic duality of function algebras and Hopf
group algebras for the same classes of groups, but already without the assumption of commutativity
(Theorems~\ref{lf} and~\ref{prf}). Note that it is reasonable to regard both ${\mathbb{C} \Gamma}$ and $\mathcal{O}(G)$ (where $\Gamma$ is locally
finite and G is profinite) as Hopf algebras of the same type and similarly combine $\mathbb{C}^\Gamma$ and  $\mathcal{O}(G)'$.

Note that, in \cite{Ak20+},  Akbarov proposed a generalization of his theory suitable, in particular, for group
algebras and function algebras on arbitrary (not only finitely generated, as before, or locally finite, as
here) countable discrete groups by showing that these algebras are holomorphically reflexive in this
broader sense (within the framework of the theory of stereotype spaces).   However, the result was
achieved at the price of further complication of the definitions. In particular, along with Arens--Michael
envelopes, two additional operations are applied: the pseudo-saturation, which, in general, modifies the
topology of the locally convex space, and the passage to an immediate stereotype subspace (see \cite{Ak20+}, formulas (100)--(105), and also~(81) and~(86)).  For function algebras and group algebras associated
with discrete groups, the second operation is excessive \cite[Theorems 4.3 and~4.4]{Ak20+}, but the first one (or
a similar one) is apparently inevitable. It is possible that one of the main results of the present paper,
Theorem~\ref{lf},  can be derived as a special case of \cite[Theorem 4.8]{Ak20+}.  However, the Hopf algebras treated
here have good topological properties, and both the additional operations are not necessary. Moreover,
the direct proof of Theorem~\ref{lf} is undoubtedly shorter. Therefore, we use a simpler definition from \cite{Ar20+}, to which the author of the present paper prefers to refer when possible. Establishing the limits of its
applicability is one of the goals of further research.

\section{Abelian case}
\label{absect}

In this section, we show that the Abelian version of holomorphic duality can be extended to locally
finite and profinite groups.

By a \emph{holomorphic character} of a complex Lie group $G$ we mean a holomorphic homo\-morphism
from $G$ to~$\mathbb{C}^\times$ (the group of invertible complex numbers) \cite[Sec.\,7.1]{Ak08}. Just as this is done in Pontryagin's theory, one can define the \emph{holomorphically dual group} $G^\bullet$ as the set of all holomorphic characters of $G$ endowed with the pointwise multiplication and the topology of uniform convergence on compacta. It was
Akbarov who showed that, if $G$ is a compactly generated Stein group, then $G^\bullet$ can be endowed with the
structure of a complex Lie group and is also a compactly generated Stein group (obviously Abelian) \cite[Theorem 7.1]{Ak08}. The direct proof uses structural theory, but does not provide a canonical construction.
However, the passage to Hopf algebras shows that such a construction does exist, see Remark~\ref{functsp}.

Let us turn to the case in which $G$ is zero-dimensional, i.e., discrete, and, in particular, every character
is holomorphic. Certainly, the term ``holomorphic'' in the context of discrete groups seems excessive,
but it is justified by the fact that the dual group to discrete may turn out to be a nondiscrete Lie group
(for example, $\mathbb{Z}^\bullet\cong\mathbb{C}^\times$) or even a pro-Lie group; see Remark~\ref{motpro}.

The next assertion is a particular case of the aforementioned theorem, Theorem~7.1 of~\cite{Ak08}.

\begin{pr}\label{fga}
Let $G$ be a finitely generated (discrete) Abelian group. Then its dual group $G^\bullet$ is a compactly generated Abelian Stein group. Here $G$ and $G^\bullet$
are holomorphically dual to each
other.
\end{pr}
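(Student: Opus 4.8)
The plan is to reduce everything to the structure theorem for finitely generated Abelian groups together with a direct computation of the dual, checking at each stage that the compact-open topology produces the expected Lie group topology. Since a finitely generated discrete group is a (zero-dimensional) compactly generated Stein group, the assertion is formally a special case of \cite[Theorem 7.1]{Ak08}; nevertheless I would carry out the explicit computation, both because it is short and because the same structural approach reappears in the locally finite and profinite cases. First I would write $G\cong\mathbb{Z}^n\oplus F$ with $F$ a finite Abelian group. As a homomorphism into $\mathbb{C}^\times$ out of a direct sum is determined by its restrictions to the summands, $G^\bullet\cong(\mathbb{Z}^\bullet)^n\times F^\bullet$. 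A character of $\mathbb{Z}$ is fixed by the image of a generator, which may be any element of $\mathbb{C}^\times$, so $\mathbb{Z}^\bullet\cong\mathbb{C}^\times$ as already recorded in the text; and since $\mathbb{C}^\times$ contains all roots of unity, $F^\bullet\cong F$. Hence $G^\bullet\cong(\mathbb{C}^\times)^n\times F$ as abstract groups.

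Next I would check that the topology of uniform convergence on compacta matches the Lie group topology on this product. As $G$ is discrete, compact subsets are finite, so the compact-open topology is that of pointwise convergence; on each $\mathbb{Z}$-factor, $\chi_j\to\chi$ pointwise is equivalent to $\chi_j(1)\to\chi(1)$ in $\mathbb{C}^\times$, that is, to convergence in $\mathbb{C}^\times$. Thus $G^\bullet\cong(\mathbb{C}^\times)^n\times F$ as topological groups. This is an Abelian complex Lie group; it is Stein because $\mathbb{C}^\times$ is Stein and finite products and finite disjoint unions of Stein manifolds are Stein; and it is compactly generated since it has finitely many connected components, each a copy of the compactly generated group $(\mathbb{C}^\times)^n$.

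For the biduality I would compute $(G^\bullet)^\bullet$ in the same fashion. The holomorphic characters of $\mathbb{C}^\times$ are exactly the maps $z\mapsto z^k$ with $k\in\mathbb{Z}$ — here holomorphy is essential, as it excludes the additional continuous characters present in the classical theory — so $(\mathbb{C}^\times)^\bullet\cong\mathbb{Z}$; combined with $F^\bullet\cong F$ this yields $(G^\bullet)^\bullet\cong\mathbb{Z}^n\times F\cong G$. Finally I would verify that the canonical evaluation homomorphism $\iota_G\colon G\to(G^\bullet)^\bullet$, $g\mapsto(\chi\mapsto\chi(g))$, realizes this isomorphism: injectivity is the statement that holomorphic characters separate the points of $G$, and surjectivity follows by comparing the two explicit descriptions. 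Both groups carry the discrete topology — on $(G^\bullet)^\bullet$ the compact-open topology is discrete because $\mathbb{Z}^n\times F$ is — so $\iota_G$ is automatically a homeomorphism.

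The main obstacle is bookkeeping about topologies rather than any deep difficulty: one must track carefully how the compact-open topology collapses at each stage, trivially to pointwise convergence at the first dual because the domain is discrete, but genuinely involving compacta at the second dual, where it must reproduce the discrete topology on $G$. The one substantive point, and what distinguishes this from Pontryagin duality, is the identification $(\mathbb{C}^\times)^\bullet\cong\mathbb{Z}$: holomorphy forces the characters to be integer powers, and it is this rigidity that makes the bidual discrete and renders the evaluation map an isomorphism.
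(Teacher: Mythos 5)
Your proof is correct, but it takes a different route from the paper: the paper gives no computation at all, simply observing that a finitely generated discrete Abelian group is a (zero-dimensional) compactly generated Stein group, so the statement is a particular case of \cite[Theorem 7.1]{Ak08} --- you note this reduction yourself in your opening lines and then deliberately bypass it. Your explicit argument via the structure theorem $G\cong\mathbb{Z}^n\oplus F$, the identifications $\mathbb{Z}^\bullet\cong\mathbb{C}^\times$, $F^\bullet\cong F$, $(\mathbb{C}^\times)^\bullet\cong\mathbb{Z}$, and the topology bookkeeping at both duals is sound; in particular you correctly isolate the one substantive point, namely that holomorphy rigidifies the characters of $\mathbb{C}^\times$ to the integer powers $z\mapsto z^k$, which is exactly what makes the bidual discrete (unlike in Pontryagin theory, where $\mathbb{C}^\times$ would have a larger continuous dual). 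What the paper's citation buys is brevity and consistency with Akbarov's general framework, which covers all compactly generated Stein groups, Abelian or not, at once --- note that the paper itself remarks that Akbarov's proof ``uses structural theory, but does not provide a canonical construction,'' so your computation is in the same structural spirit, just specialized and made explicit. What your version buys is self-containedness and a template that, as you say, foreshadows the locally finite and profinite computations in Propositions~\ref{ablf} and~\ref{abprf}. One small point worth tightening: your claim that the compact-open topology on $(G^\bullet)^\bullet$ is discrete ``because $\mathbb{Z}^n\times F$ is'' is circular as phrased; the correct one-line justification is that distinct characters $z\mapsto z^k$ of $\mathbb{C}^\times$ are uniformly separated on the compact unit circle, so singletons are open in the compact-open topology. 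This is a cosmetic repair, not a gap.
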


We now turn to locally finite Abelian groups.

\begin{pr}\label{ablf}
Let $G$ be a locally finite Abelian group. Then  $G^\bullet$ coincides as a topological group
with the Pontryagin dual and is profinite.
\end{pr}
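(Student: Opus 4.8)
The plan is to reduce everything to the observation that a locally finite Abelian group is precisely a torsion Abelian group, so that every holomorphic character is forced to take values on the unit circle. First I would recall that, since $G$ is discrete, \emph{every} homomorphism into $\mathbb{C}^\times$ is holomorphic, so that $G^\bullet$ is just $\mathrm{Hom}(G,\mathbb{C}^\times)$ endowed with the topology of uniform convergence on compacta. As $G$ is discrete, its compact subsets are exactly the finite ones, and so this topology coincides with the topology of pointwise convergence.

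The key algebraic step is the following. Let $g\in G$; since $G$ is locally finite Abelian, the cyclic subgroup generated by $g$ is finite, say of order $n$. Then for any $\chi\in G^\bullet$ we have $\chi(g)^n=\chi(g^n)=1$, so $\chi(g)$ is an $n$th root of unity and, in particular, lies on the unit circle $\mathbb{T}$. Hence every holomorphic character takes values in $\mathbb{T}$, and $\mathrm{Hom}(G,\mathbb{C}^\times)=\mathrm{Hom}(G,\mathbb{T})$ as abstract groups; that is, $G^\bullet$ coincides with the Pontryagin dual $\widehat{G}$ as a group. Since all values lie in $\mathbb{T}$, whose subspace topology inherited from $\mathbb{C}^\times$ is its usual topology, the topology of pointwise convergence on $G^\bullet$ agrees with that on $\widehat{G}$, and the two coincide as topological groups.

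It then remains to see that $\widehat{G}$ is profinite. Here I would write $G$ as the directed union of its finite subgroups $G_i$ (using local finiteness once more), so that $G=\varinjlim G_i$. Applying Pontryagin duality turns this direct limit into the inverse limit $\widehat{G}=\varprojlim\widehat{G_i}$ of the finite duals $\widehat{G_i}$, exhibiting $\widehat{G}$ as a projective limit of finite groups, i.e. as a profinite group. Alternatively, one may simply invoke the classical fact that the dual of a torsion discrete Abelian group is profinite \cite[Theorem 2.9.6]{RZ10}.

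I expect no serious obstacle: the whole argument rests on the torsion/roots-of-unity observation together with the duality between direct and inverse limits. The one point requiring a little care is the matching of the two topologies, and this is exactly where the discreteness of $G$ (reducing uniform convergence on compacta to pointwise convergence) and the coincidence of the subspace topologies on $\mathbb{T}\subset\mathbb{C}^\times$ are used.
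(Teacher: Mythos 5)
Your proposal is correct and follows essentially the same route as the paper's proof: local finiteness forces $G$ to be torsion, so every (automatically holomorphic) character takes values in roots of unity in $\mathbb{T}$, whence $G^\bullet$ coincides with the Pontryagin dual, whose profiniteness is the classical fact \cite[Theorem 2.9.6]{RZ10}. You merely spell out details the paper leaves implicit (compacta in a discrete group are finite, so both topologies reduce to pointwise convergence) and add an optional direct-limit/inverse-limit argument for profiniteness, which is in effect the proof of the cited classical result.
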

\begin{proof}
It can readily be seen that $G$ is locally finite if and only if it is periodic. In this case, all characters
of $G$ take values in~$\mathbb{T}$. Thus,  $G^\bullet$  coincides with the Pontryagin dual group, which is profinite in the case under consideration (see, e.g.,  \cite[Theorem 2.9.6]{RZ10}).
\end{proof}

Certainly, a profinite group need not be a Lie group. Therefore, before introducing the group holomorphically dual to it, it is necessary to clarify which functions on it are regarded as holomorphic ones.
To do this, we expand the class of groups under consideration. We call a topological group a \emph{complex
pro-Lie group} if it is a limit (in the category of topological groups) of a projective system in the category
of finite-dimensional complex Lie groups (cf. the real case in \cite{HM07,HM15}). In particular, profinite groups
are complex pro-Lie groups. Also, a complex-valued function on a complex pro-Lie group is said to be \emph{holomorphic} if it is a composition of a continuous homomorphism into a finite-dimensional complex
Lie group and a holomorphic function. Accepting this definition, we can consider the holomorphically
dual group~$G^\bullet$ for a profinite Abelian group $G$.

\begin{rems}\label{motpro}
(A)~The choice of the class of complex pro-Lie groups can be motivated by the following
observation. Just like every Abelian discrete group is the union of the inductive system of its finitely
generated subgroups, every Abelian finite-dimensional complex Lie group is the union of the inductive
system of its compactly generated open subgroups. In turn, to latter system, there corresponds projective
system of groups of holomorphic characters each of which is also an (Abelian finite-dimensional
compactly generated) complex Lie group. For example, the holomorphic characters of $\mathbb{Q}$  form a group
which is the complexification of a solenoid, namely, the projective limit of the coverings of the form
$\mathbb{C}^\times\to\mathbb{C}^\times$.
As we see, when considering the duality, the complex pro-Lie groups arise naturally.

(B)~The differences from the case of real pro-Lie groups are as follows. First, a continuous
homomorphism of complex Lie groups need not be holomorphic, while continuous homo\-morphisms
of real Lie groups are always smooth, which reduces real pro-Lie groups to projective limits in the
category of topological groups. This means that there should be noticeably fewer complex pro-Lie
groups than real ones, which is confirmed by the famous Yamabe theorem that every almost connected
locally compact group is a real pro-Lie group (see the discussion in~\cite{HM07}).
\end{rems}

\begin{lm}\label{holloco}
A function on a profinite group (not necessarily Abelian) is holomorphic if and only
if it is locally constant.
\end{lm}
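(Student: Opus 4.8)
The plan is to play the totally disconnected (profinite) structure of $G$ against the no-small-subgroups property of finite-dimensional complex Lie groups. Two standard facts will be the workhorses. First, a profinite group $G$ has a neighbourhood basis at the identity consisting of open normal subgroups $N$, each of finite index since $G$ is compact. Second, every finite-dimensional complex Lie group $L$ (viewed if necessary as a real Lie group) admits a neighbourhood $U$ of its identity containing no nontrivial subgroup. I shall also use that a finite group, regarded as a $0$-dimensional complex Lie group, is a legitimate target for the homomorphism in the definition of holomorphy, and that every function on such a group is holomorphic.

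For the implication ``holomorphic $\Rightarrow$ locally constant'', I would write a holomorphic $f$ as $f = h\circ\phi$ with $\phi\colon G\to L$ a continuous homomorphism into a finite-dimensional complex Lie group and $h\colon L\to\mathbb{C}$ holomorphic. Since $h\circ\phi$ is locally constant as soon as $\phi$ is, it suffices to treat $\phi$. Picking $U$ as above, the preimage $\phi^{-1}(U)$ is an open neighbourhood of the identity, hence contains an open subgroup $N$ of finite index; then $\phi(N)$ is a subgroup of $L$ lying inside $U$, so $\phi(N)=\{e\}$. Thus $N\subseteq\ker\phi$, the map $\phi$ is constant on each (open) left coset of $N$, and therefore $f=h\circ\phi$ is locally constant.

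For the converse ``locally constant $\Rightarrow$ holomorphic'', I would start from the observation that the fibres of a locally constant $f$ are clopen and partition the compact group $G$, so $f$ assumes only finitely many values. Next, for each $x\in G$, local constancy provides an open neighbourhood of $x$ on which $f$ is constant; since the cosets of open normal subgroups form a neighbourhood basis, this neighbourhood contains $xN_x$ for some open normal subgroup $N_x$, and $f$ is constant on $xN_x$. Covering $G$ by finitely many such cosets and intersecting the corresponding subgroups yields a single open normal subgroup $N$ of finite index on which $f$ is right-invariant: if $x\in x_jN_{x_j}$ and $n\in N\subseteq N_{x_j}$, then $xn\in x_jN_{x_j}$, whence $f(xn)=f(x)$. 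Hence $f$ factors as $f=\bar f\circ\pi$ through the quotient homomorphism $\pi\colon G\to G/N$ onto the finite group $G/N$; regarding $G/N$ as a $0$-dimensional complex Lie group makes $\bar f$ automatically holomorphic, so $f$ is holomorphic in the required sense.

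The hard part is the forward implication, and specifically excluding an infinite image for $\phi$. Compactness of $\phi(G)$ alone does not suffice, since a complex Lie group may harbour large compact subgroups (for instance $\mathbb{T}\subset\mathbb{C}^\times$); it is exactly the no-small-subgroups property of $L$, confronted with the basis of open subgroups of the profinite $G$, that collapses $\phi$ onto a finite quotient. In the converse direction the only delicate point is to secure normality of $N$ so that $G/N$ is a group and $\pi$ a homomorphism, which costs nothing here because open normal subgroups already form a neighbourhood basis in a profinite group.
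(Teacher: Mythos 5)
Your proof is correct and takes essentially the same route as the paper: both directions hinge on playing the no-small-subgroups property of a finite-dimensional (complex) Lie group against the neighbourhood basis of open subgroups in the profinite group $G$, so that a holomorphic function factors through a finite quotient, and conversely a locally constant function is constant on cosets of an open normal subgroup of finite index and hence factors through a finite group viewed as a $0$-dimensional complex Lie group. The only difference is one of detail: you prove by hand (finite cover, intersection of the $N_x$) the equivalence between local constancy and factoring through a finite quotient, which the paper simply delegates to citations of Ribes--Zalesskii and Casselman.
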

\begin{proof}
We claim first that it follows from the definition we have adopted that the function on a profinite
group $G$ is holomorphic if and only if it is a composition of a continuous homomorphism into a finite
group and of some function. Indeed, let $\phi$ be a continuous homomorphism of $G$ into a Lie group. Since
$G$ is profinite, it has a neighborhood base of identity consisting of open subgroups \cite[Lemma 2.1.1]{RZ10}; however, the kernel of $\phi$ is open, because any Lie group does not contain small subgroups. Since $G$ is
compact, it follows that the image of $\phi$ is finite (for details, see, e.g., \cite{Ch}).

Thus, a function $f$ on $G$ is holomorphic if and only if there is a normal subgroup $H$ of finite index such
that $f$ is constant on every coset of $H$ (which is open according to \cite[Lemma 2.1.1]{RZ10}). On the other hand,
the last condition is equivalent to the fact that $f$ is locally constant (see, e.g.,  \cite[Lemma I.6.2]{Ca95}).
\end{proof}

Note that our definition of a holomorphic function on a profinite group is a particular case of the
definition of Bruhat-smooth functions formulated in \cite[Sec.\,1]{Br61}. Therefore, in the theory of profinite
groups and spaces, locally constant functions are sometimes called ``smooth''; see, e.g.,  \cite[Chapter I, Part~II, Sec.\,5]{Ca95}.

\begin{lm}\label{lincom}
 Every locally constant function on a profinite Abelian group $G$ is a linear combination of holomorphic characters.
\end{lm}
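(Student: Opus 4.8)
The plan is to reduce the statement to the elementary fact that on a finite Abelian group every complex-valued function is a linear combination of characters, and then to transfer this back to $G$ along a suitable finite quotient. First I would invoke Lemma~\ref{holloco}: since $f$ is locally constant on the profinite group $G$, it is constant on the cosets of some open normal subgroup $H$ of finite index, so $f$ factors as $f=\bar f\circ\pi$, where $\pi\colon G\to K$ is the quotient map onto the finite Abelian group $K=G/H$ and $\bar f$ is a function on~$K$.

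Next I would apply the duality for finite Abelian groups: the characters of $K$ form a basis of the space $\mathbb{C}^K$ of all complex-valued functions on $K$, since they are linearly independent (by the orthogonality relations) and there are exactly $|K|=\dim\mathbb{C}^K$ of them. Hence $\bar f=\sum_j c_j\chi_j$ for finitely many characters $\chi_j$ of $K$ and scalars $c_j\in\mathbb{C}$, and therefore $f=\sum_j c_j(\chi_j\circ\pi)$. It then remains only to check that each composite $\chi_j\circ\pi$ is a \emph{holomorphic} character of~$G$. Indeed, $\chi_j\circ\pi$ is a continuous homomorphism $G\to\mathbb{C}^\times$ that factors through the finite group $K$; by the analysis in the proof of Lemma~\ref{holloco} (a continuous homomorphism of a profinite group into a Lie group has finite image, and composing with a holomorphic function on the target yields a holomorphic function), it is holomorphic. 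This exhibits $f$ as a linear combination of holomorphic characters, as required.

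I do not expect a serious obstacle here: the only input beyond Lemma~\ref{holloco} is the classical character theory of finite Abelian groups, and the single point that needs care is verifying that the pulled-back characters $\chi_j\circ\pi$ are genuinely holomorphic. That verification is immediate from the factorization through the finite quotient $K$, so the proof is essentially a bookkeeping of this reduction rather than a computation.
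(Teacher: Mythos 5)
Your proof is correct and follows essentially the same route as the paper: reduce to the classical fact that characters span the function space of a finite Abelian group, and pull this back to $G$ along a finite quotient, checking that the pulled-back characters are holomorphic. The only cosmetic difference is that the paper first uses compactness to write $f$ as a linear combination of characteristic functions of cosets of finite-index subgroups before applying the finite-group fact, whereas you factor $f$ wholesale through a single finite quotient $G/H$ --- a step justified not by the statement of Lemma~\ref{holloco} itself but by the equivalence (locally constant $\Leftrightarrow$ constant on cosets of a finite-index normal subgroup) established inside its proof, which you correctly acknowledge.
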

\begin{proof}
Let $f$ be a locally constant function. It follows from \cite[Theorem 2.1.3]{RZ10} that the topology has
a base consisting of cosets of subgroups of finite index. Since the group is compact, there is a finite
cover by subsets in this base on each of which $f$ is constant. Then $f$ is a linear combination of the
characteristic functions of these subsets.

It can readily be seen that every complex-valued function on a finite Abelian group is a linear
combination of characters of the group. Therefore, the function on $G$ which is constant on every coset of
some subgroup of finite index is a linear combination of holomorphic characters of $G$. In particular, this
concerns the characteristic function of the coset. The rest is clear.
\end{proof}

\begin{pr}\label{abprf}
Let $G$ be a profinite Abelian group. Then $G^\bullet$ coincides with the Pontryagin dual
group and is a locally finite discrete group.
\end{pr}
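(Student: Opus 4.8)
The plan is to identify the underlying set of $G^\bullet$ with that of the Pontryagin dual $\widehat{G}$, then to check that the group operations and topologies agree, and finally to read off discreteness and local finiteness. First I would argue that, for profinite $G$, a holomorphic character and a continuous character into $\mathbb{T}$ are the same thing. By Lemma~\ref{holloco}, a holomorphic character is a locally constant (in particular continuous) homomorphism $\chi\colon G\to\mathbb{C}^\times$. Exactly as in the proof of that lemma, the Lie group $\mathbb{C}^\times$ has no small subgroups, so $\ker\chi$ is open and $\chi$ factors through a finite quotient; hence $\chi(G)$ is a finite subgroup of $\mathbb{C}^\times$, which necessarily lies in~$\mathbb{T}$. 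Thus every holomorphic character is a continuous character into~$\mathbb{T}$. Conversely, a continuous character $G\to\mathbb{T}$ again has open kernel (again by the absence of small subgroups in $\mathbb{T}$), so it is constant on the cosets of an open subgroup of finite index, i.e.\ locally constant, and therefore holomorphic. This yields a bijection between $G^\bullet$ and $\widehat{G}$ that evidently respects pointwise multiplication.

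Next I would match the topologies. Both groups carry the topology of uniform convergence on compacta, and since $G$ is compact this is just the topology of uniform convergence on all of $G$; so the two topological groups coincide. To see directly that this topology is discrete, recall that $\mathbb{T}$ has no small subgroups, so there is a neighborhood $U$ of~$1$ containing no nontrivial subgroup. If two characters $\chi_1,\chi_2$ into $\mathbb{T}$ are uniformly close enough that $\chi_1(g)\chi_2(g)^{-1}\in U$ for every $g$, then the image of $\chi_1\chi_2^{-1}$, being a subgroup of $\mathbb{T}$ contained in $U$, is trivial, whence $\chi_1=\chi_2$. Thus distinct characters are uniformly separated and $G^\bullet$ is discrete.

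Finally, local finiteness follows because every $\chi\in G^\bullet$ has finite image and hence finite order, so $G^\bullet$ is periodic; by the equivalence recorded in the proof of Proposition~\ref{ablf}, a periodic Abelian group is locally finite. Alternatively, once the identification $G^\bullet=\widehat{G}$ of the first step is in place, the entire statement can be obtained from classical Pontryagin duality for compact totally disconnected Abelian groups (the assertion dual to \cite[Theorem 2.9.6]{RZ10}). The one point that must be handled with care is precisely this first identification, namely that local constancy of a character forces its image into $\mathbb{T}$ and, conversely, that mere continuity into $\mathbb{T}$ already entails local constancy; after that, everything reduces to a routine transcription of the compact--discrete Pontryagin correspondence.
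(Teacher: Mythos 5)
Your proof is correct, and its skeleton is the same as the paper's: both hinge on Lemma~\ref{holloco} to identify the holomorphic characters of $G$ with the continuous characters into $\mathbb{T}$, and then read off the statement from the compact--discrete Pontryagin correspondence. The differences are in how the individual steps are discharged, and they are worth noting. Where the paper derives the containment of the image in $\mathbb{T}$ from compactness of $G$ (a compact subgroup of $\mathbb{C}^\times$ lies in $\mathbb{T}$), you instead use the no-small-subgroups property of $\mathbb{C}^\times$ to get an open kernel, hence a finite image consisting of roots of unity; and where the paper cites \cite[Lemma 2.9.2]{RZ10} (finiteness of the image of a continuous character into $\mathbb{T}$) together with \cite[Lemma I.6.2]{Ca95} for the converse, and \cite[Theorem 2.9.6]{RZ10} for local finiteness, you prove everything directly: discreteness of $G^\bullet$ via the standard argument that two characters uniformly close modulo a no-small-subgroups neighborhood coincide, and local finiteness via the observation that every character has finite order, so $G^\bullet$ is periodic Abelian, combined with the equivalence (periodic $\Leftrightarrow$ locally finite for Abelian groups) already recorded in the proof of Proposition~\ref{ablf}. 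Your version buys self-containedness at the cost of length; the paper's buys brevity by outsourcing to \cite{RZ10} and \cite{Ca95}. One small gloss to tighten: when you claim the kernel of a continuous character is open ``by the absence of small subgroups,'' you are also tacitly using that profinite groups have a neighborhood base of the identity consisting of open subgroups (as in the proof of Lemma~\ref{holloco}); and your identification of the topologies silently uses that $\mathbb{T}\hookrightarrow\mathbb{C}^\times$ is a uniform embedding, so uniform convergence in $\mathbb{C}^\times$ and in $\mathbb{T}$ agree on $\mathbb{T}$-valued characters. Neither point is a gap, but both deserve a sentence.
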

\begin{proof}
 By Lemma~\ref{holloco}, the holomorphic characters of $G$ are exactly its locally constant characters.
Since every locally constant function is continuous, we see that any holomorphic character of $G$ is a
continuous homomorphism into~$\mathbb{C}^\times$. Since the profinite group is compact, it follows that the image of
any continuous homomorphism from the group to  $\mathbb{C}^\times$  is contained in~$\mathbb{T}$.  On the other hand, the image
of a continuous homomorphism in~$\mathbb{T}$ is a finite group \cite[Lemma 2.9.2]{RZ10}, and thus this homomorphism
is locally constant  \cite[Lemma I.6.2]{Ca95}.
Thus, the set of holomorphic characters of $G$ coincides with the set
of continuous characters in the classical sense. Consequently,  $G^\bullet$ coincides with the Pontryagin dual
group. The latter is locally finite by \cite[Theorem 2.9.6]{RZ10}.
\end{proof}

Obviously, for every $g\in G$, the formula $\iota(g)\!:\chi\mapsto \chi(g)$, where $\chi\in G^\bullet$, defines a homomorphism $ G^\bullet\to \mathbb{C}^\times$. It follows from Propositions~\ref{ablf} and~\ref{abprf} that this character is holomorphic in both the cases,
and thus we obtain the following assertion concerning the reflexivity.

\begin{co}\label{abtref}
If an Abelian group $G$ is locally finite or profinite, then the homomorphism $\iota\!:G\to G^{\bullet\bullet}$ is well defined and is a topological isomorphism.
\end{co}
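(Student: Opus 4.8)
The plan is to reduce the statement entirely to the classical Pontryagin duality theorem, using Propositions~\ref{ablf} and~\ref{abprf} to translate the holomorphic dual into the Pontryagin dual at each stage. First I would record that, in each of the two cases, these propositions identify $G^\bullet$ \emph{as a topological group} with the Pontryagin dual $\widehat{G}$ of $G$. Moreover, taken together the two propositions show that the functor $(-)^\bullet$ interchanges the classes of locally finite discrete and profinite Abelian groups: if $G$ is locally finite then $G^\bullet$ is profinite, and if $G$ is profinite then $G^\bullet$ is locally finite discrete. Since both classes consist of locally compact Abelian groups, whichever proposition is relevant to $G^\bullet$ is indeed applicable.

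Next I would apply the same two propositions a second time, now with $G^\bullet$ in the role of $G$. In the locally finite case $G^\bullet$ is profinite, so Proposition~\ref{abprf} identifies $G^{\bullet\bullet}=(G^\bullet)^\bullet$ with the Pontryagin dual of $G^\bullet$; in the profinite case $G^\bullet$ is locally finite, so Proposition~\ref{ablf} does the same. In either case $G^{\bullet\bullet}$ coincides, as a topological group, with the Pontryagin double dual $\widehat{\widehat{G}}$ of $G$.

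The remaining point is to check that, under these identifications, the holomorphic evaluation map $\iota$ is transported to the canonical map of Pontryagin duality. This is immediate from the defining formula $\iota(g)\colon\chi\mapsto\chi(g)$, which is literally the classical evaluation pairing; well-definedness, i.e.\ that $\iota(g)$ is a genuine holomorphic character of $G^\bullet$, is exactly the observation recorded just before the statement. Since in both cases the characters take values in $\mathbb{T}\subset\mathbb{C}^\times$, the value $\chi(g)$ is unchanged by the identification $G^\bullet\cong\widehat{G}$, so the two pairings agree. The Pontryagin duality theorem then asserts that $g\mapsto(\chi\mapsto\chi(g))$ is a topological isomorphism $G\to\widehat{\widehat{G}}$ for every locally compact Abelian group, and both locally finite discrete groups and profinite groups are of this kind; this yields the claim in each case.

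The main obstacle here is bookkeeping rather than analysis. The only thing one must be careful about is that the coincidences furnished by Propositions~\ref{ablf} and~\ref{abprf} are genuinely topological, so that the double-dual identification $G^{\bullet\bullet}\cong\widehat{\widehat{G}}$ is an isomorphism of topological groups and not merely of abstract groups, and that the evaluation map is carried over to the classical one without introducing any new continuity or convergence questions. Because the propositions already assert coincidence as topological groups and the formula for $\iota$ is unchanged under the identifications, no additional estimates are required, and the result follows directly from Pontryagin duality.
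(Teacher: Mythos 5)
Your proposal is correct and follows essentially the same route as the paper: the remark preceding the corollary notes that $\iota(g)$ is a well-defined holomorphic character of $G^\bullet$ precisely because Propositions~\ref{ablf} and~\ref{abprf} identify the holomorphic dual with the Pontryagin dual (applied once to $G$ and once to $G^\bullet$, the two classes being interchanged), after which classical Pontryagin duality for locally compact Abelian groups yields the topological isomorphism. Your write-up simply makes explicit the bookkeeping that the paper leaves implicit, including the harmless point that characters take values in $\mathbb{T}\subset\mathbb{C}^\times$ so the pairings agree.
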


\begin{rem}
Note that here we do not claim that the isomorphism in Corollary~\ref{abtref}  is canonical, since
the specific definition of holomorphic functions on a profinite group is used. We still have no definition
suitable for both classes of groups (not to mention the consistency with the definition for Lie groups).
The search for such a definition is an urgent problem, and it seems that complex pro-Lie groups can be
useful here. A possible approach is to consider the duality at the level of Hopf algebras as the primary
concept; see Remark~\ref{functsp}.
\end{rem}

\section{General case}
\label{nonab}

The construction of holomorphic reflexivity for some topological Hopf algebras and the term itself
were introduced by Akbarov in the context of rigid stereotype Hopf algebras \cite[Russian p.\,128]{Ak08}. Akbarov
studied the holomorphic reflexivity for Hopf algebras associated with compactly generated complex Lie
groups as the first problem. Below we follow the formulation of the notion of holomorphic reflexivity given
in~\cite{Ar20+}. This narrower definition, although restricting the class of objects under consideration, allows one
to solve the problem without any reference to the theory of stereotype spaces, which is used in~\cite{Ak08}. (The
definition of holomorphic reflexivity in a broader sense, within the framework of this theory, can be found
in~\cite{Ak20+}.)

Recall that a Hopf algebra in the symmetric monoidal category of complete locally convex spaces
with the bifunctor $(-)\mathbin{\widehat{\otimes}} (-)$  of complete projective tensor product is called a \emph{Hopf $\mathbin{\widehat{\otimes}}$-algebra}, see \cite{Lit} or \cite{Pir_stbflat}.
Following \cite{BFGP}, we say that a Hopf $\mathbin{\widehat{\otimes}}$-algebra is
\emph{well-behaved}  if it is either a nuclear Fr\'echet space
or a (complete) nuclear (DF)-space.

The following definitions are formulated in \cite{Ar20+} for the case in which $H$ is a nuclear Fr\'echet space.
However, they can be carried over unchanged to the case in which $H$ is a nuclear (DF)--space, and hence
also to the case of an arbitrary well-behaved $H$. Recall that the completion of a topological associative $\mathbb{C}$-algebra $A$ with respect to the topology given by all continuous submultiplicative prenorms is called
the Arens--Michael envelope and is denoted by $\widehat A$. If $H$ is well-behaved, then $H^\bullet\!:=(H')\sphat\,\,$ is also a Hopf $\mathbin{\widehat{\otimes}}$-algebra by \cite[Proposition 6.7]{Pir_stbflat}  (not necessarily well-behaved) and an Arens--Michael algebra. We say that  $H^\bullet$ \emph{holomorphically dual} to~$H$ (cf. \cite[Definition~1.1]{Ar20+}). If $H^\bullet$ is nevertheless well-behaved, then the homomorphism of Hopf $\mathbin{\widehat{\otimes}}$-algebras  $H^{\bullet\bullet}\to H$ is well defined, cf. \cite[formula~(1.4)]{Ar20+}.  If,
moreover, this is a topological isomorphism, then we say that $H$ is  \emph{holomorphically reflexive} (cf. \cite[Definition~1.2]{Ar20+}).

Let $G$ be a countable discrete group. We are interested in two Hopf $\mathbin{\widehat{\otimes}}$-algebras associated with it,
namely, $\widehat{\mathbb{C} G}$ (the Arens--Michael envelope of the group algebra) and $\mathbb{C}^G$ (the algebra of all functions
on $G$). Note that $\widehat{\mathbb{C} G}$  is a nuclear space \cite[Theorem~2]{Ar21+}.
Moreover, $\widehat{\mathbb{C} G}$  is a Fr\'echet space if and only if $G$
is finitely generated \cite[Proposition~3.15]{AHHFG}  and, on the other hand, $\widehat{\mathbb{C} G}$  is a (DF)--space if and only if $G$ is
locally finite \cite[Theorem~6]{Ar21+}.  Thus, these two cases exhaust all opportunities for $\widehat{\mathbb{C} G}$ to be well-behaved.
On the contrary, $\mathbb{C}^G$  is always a nuclear Fr\'echet space, and thus a well-behaved Hopf $\mathbin{\widehat{\otimes}}$--algebra.

The following assertion is implicitly contained in \cite{Ak08}.
\begin{thm}\label{fg}
 Let $G$ be a finitely generated group. Then $\widehat{\mathbb{C} G}$ and $\mathbb{C}^G$ are holomorphically dual to
each other, and thus are holomorphically reflexive.
\end{thm}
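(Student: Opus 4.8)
The plan is to establish the two holomorphic dualities separately and then check that together they give reflexivity. First I would recall that, since $G$ is finitely generated, $\widehat{\mathbb{C} G}$ is a nuclear Fréchet space by \cite[Proposition~3.15]{AHHFG} together with the nuclearity from \cite[Theorem~2]{Ar21+}, while $\mathbb{C}^G$ is always a nuclear Fréchet space. Thus both algebras are well-behaved, and both $\mathbb{C}^G$ and $\widehat{\mathbb{C} G}$ are nuclear Fréchet spaces. In the nuclear Fréchet setting the functor $H\mapsto H'$ is compatible with $(-)\mathbin{\widehat{\otimes}}(-)$, so the strong dual of a Hopf $\mathbin{\widehat{\otimes}}$-algebra is again a Hopf $\mathbin{\widehat{\otimes}}$-algebra; this is exactly the point that fails for general $G$ and is what makes the finitely generated case clean.

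Next I would identify the dual pairing. The key computation is that $(\mathbb{C}^G)'$ is naturally the group algebra $\mathbb{C} G$ as a Hopf $\mathbin{\widehat{\otimes}}$-algebra: since $G$ is countable and discrete, $\mathbb{C}^G=\prod_{g\in G}\mathbb{C}$ is a nuclear Fréchet space whose strong dual is the direct sum $\bigoplus_{g\in G}\mathbb{C}=\mathbb{C} G$, with the convolution product dual to the pointwise product on $\mathbb{C}^G$ and the comultiplication dual to the group multiplication. Taking the Arens--Michael envelope of this gives $(\mathbb{C}^G)^\bullet=((\mathbb{C}^G)')\sphat=\widehat{\mathbb{C} G}$, which proves one of the two dualities. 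For the other direction I would compute $(\widehat{\mathbb{C} G})^\bullet=((\widehat{\mathbb{C} G})')\sphat$. Here one uses that $\widehat{\mathbb{C} G}$ is an Arens--Michael algebra, so it is already complete in the submultiplicative-prenorm topology; the envelope functor acts as the identity on it, and the remaining task is to show that its strong dual is $\mathbb{C}^G$ as a Hopf $\mathbin{\widehat{\otimes}}$-algebra.

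The main obstacle I expect is precisely this last identification $(\widehat{\mathbb{C} G})'\cong\mathbb{C}^G$, and then verifying that no further completion is needed, i.e.\ that $\mathbb{C}^G$ is already Arens--Michael so that $((\widehat{\mathbb{C} G})')\sphat=\mathbb{C}^G$. The algebra $\mathbb{C}^G$ with pointwise multiplication is visibly a commutative Arens--Michael algebra (its topology is generated by the submultiplicative prenorms $f\mapsto\sup_{g\in F}|f(g)|$ over finite $F\subseteq G$), so the completion step is harmless; the delicate part is showing that the strong dual of $\widehat{\mathbb{C} G}$ is $\mathbb{C}^G$ rather than some larger or differently topologized space, which rests on the biduality $(\mathbb{C}^G)''\cong\mathbb{C}^G$ for the reflexive Fréchet space $\mathbb{C}^G$ and on the fact that $\widehat{\mathbb{C} G}=(\mathbb{C}^G)'\sphat$ already carries the correct topology. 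Finally, I would assemble the reflexivity: the two dualities mean the four corners of the diagram are $\mathbb{C}^G$, $\mathbb{C} G$, $\widehat{\mathbb{C} G}$, and $(\widehat{\mathbb{C} G})'$, and since the nuclear Fréchet duality functor is involutive up to natural isomorphism, the canonical map $H^{\bullet\bullet}\to H$ is a topological isomorphism for both $H=\mathbb{C}^G$ and $H=\widehat{\mathbb{C} G}$, giving holomorphic reflexivity.
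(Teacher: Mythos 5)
The first half of your argument is correct and coincides with the paper's: $(\mathbb{C}^G)'\cong\mathbb{C} G$ as Hopf $\mathbin{\widehat{\otimes}}$-algebras, hence $(\mathbb{C}^G)^\bullet=\bigl((\mathbb{C}^G)'\bigr)\sphat\,=\widehat{\mathbb{C} G}$ directly from the definition. The second half contains a genuine error: you assert that $\bigl(\widehat{\mathbb{C} G}\bigr)'\cong\mathbb{C}^G$, so that the remaining envelope step becomes ``harmless.'' This identification is false for every infinite finitely generated $G$. The map $\mathbb{C} G\to\widehat{\mathbb{C} G}$ is a completion with respect to a strictly coarser (submultiplicative) topology, so dualizing gives a proper dense inclusion $\bigl(\widehat{\mathbb{C} G}\bigr)'\hookrightarrow(\mathbb{C} G)'=\mathbb{C}^G$: only functions on $G$ of at most exponential growth with respect to a word-length function extend to continuous functionals on the completion. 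Concretely, for $G=\mathbb{Z}$ one has $\widehat{\mathbb{C}\mathbb{Z}}\cong\mathcal{O}(\mathbb{C}^\times)$, whose dual corresponds to the sequences $(c_n)_{n\in\mathbb{Z}}$ satisfying $|c_n|\le Cr^{|n|}$ for some $C,r>0$ --- a proper subspace of $\mathbb{C}^{\mathbb{Z}}$. Your appeal to reflexivity of the Fr\'echet space $\mathbb{C}^G$ only yields $(\mathbb{C} G)'\cong\mathbb{C}^G$; it says nothing about the dual of the completion $\widehat{\mathbb{C} G}$, because dualization does not undo the change of topology effected by the envelope.

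As a consequence, your proposal eliminates precisely the nontrivial content of the theorem. In $H^\bullet=(H')\sphat\,$ with $H=\widehat{\mathbb{C} G}$, the Arens--Michael envelope is applied to the genuinely smaller algebra $\bigl(\widehat{\mathbb{C} G}\bigr)'$ of exponential-type functions with pointwise multiplication, and here the envelope really enlarges the space. The substantive fact --- and the whole of the paper's proof of this direction --- is that the canonical map $\bigl(\widehat{\mathbb{C} G}\bigr)'\to\mathbb{C}^G$ \emph{is} an Arens--Michael envelope, which the paper quotes as \cite[Proposition~3.7]{Ar20+}; for $G=\mathbb{Z}$ this is the statement that the envelope of the algebra of exponential-type sequences under pointwise multiplication is all of $\mathbb{C}^{\mathbb{Z}}$, the holomorphic-duality analogue of Akbarov's observation that the Fourier transform coincides with the envelope functor \cite{Ak08}. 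Your surrounding framework is sound (both algebras are nuclear Fr\'echet spaces, hence well-behaved, and the duality functor preserves the Hopf $\mathbin{\widehat{\otimes}}$-structure in that class), but without this input the argument does not close, and it cannot be repaired by any biduality reasoning of the kind you propose.
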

\begin{proof}
We see directly from the definition that $\widehat{\mathbb{C} G}$ is holomorphically dual to~$\mathbb{C}^G$. The other assertion
follows from the fact that $\bigl(\widehat{\mathbb{C} G}\bigr)'\to \mathbb{C}^G$ is the Arens–Michael envelope, see \cite[Proposition~3.7]{Ar20+}.
\end{proof}

We now prove the first of our main results, namely, claiming that an analogous assertion holds also
for locally finite groups.

\begin{thm}\label{lf}
Let $G$ be a locally finite countable group. Then the Hopf $\mathbin{\widehat{\otimes}}$-algebras $\widehat{\mathbb{C} G}$ and $\mathbb{C}^G$ are holomorphically dual to each other and thus are holomorphically reflexive.
\end{thm}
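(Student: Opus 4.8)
The plan is to verify the two halves of mutual holomorphic duality separately and then read off reflexivity formally. For the first half, $(\mathbb{C}^G)^\bullet\cong\widehat{\mathbb{C} G}$, I would argue exactly as in Theorem~\ref{fg}: the strong dual of $\mathbb{C}^G=\prod_{g\in G}\mathbb{C}$ is the direct sum $\bigoplus_{g\in G}\mathbb{C}$, which under convolution is the group algebra $\mathbb{C} G$, so that $\widehat{(\mathbb{C}^G)'}=\widehat{\mathbb{C} G}$ directly from the definition of the Arens--Michael envelope. This step uses nothing beyond countability and is identical to the finitely generated case.

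The substance is the second half, $(\widehat{\mathbb{C} G})^\bullet\cong\mathbb{C}^G$. Here I would exploit local finiteness to write $G=\varinjlim_n F_n$ as an increasing union of finite subgroups, the $F_n$ being cofinal among all finite subsets of $G$. Correspondingly $\mathbb{C}^G=\varprojlim_n\mathbb{C}^{F_n}$ as a nuclear Fr\'echet space, while $\widehat{\mathbb{C} G}=\varinjlim_n\mathbb{C} F_n$ as a nuclear (DF)-space, the latter being the structural input I would extract from \cite[Theorems~2 and~6]{Ar21+}, which also guarantees well-behavedness so that passing to the strong dual preserves the Hopf $\mathbin{\widehat{\otimes}}$-algebra structure. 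Taking strong duals and using that, for these limits of finite-dimensional spaces, duality interchanges $\varinjlim$ and $\varprojlim$ (so that $(\bigoplus)'=\prod$), I would identify $(\widehat{\mathbb{C} G})'\cong\varprojlim_n(\mathbb{C} F_n)'\cong\varprojlim_n\mathbb{C}^{F_n}\cong\mathbb{C}^G$. The finite-level dualities $(\mathbb{C} F_n)'\cong\mathbb{C}^{F_n}$ are the standard duality of finite-dimensional Hopf algebras, so the inductive/projective identification should carry the comultiplication of $\widehat{\mathbb{C} G}$ to pointwise multiplication on $\mathbb{C}^G$ and conversely. Finally I would observe that $\mathbb{C}^G$ is already an Arens--Michael algebra, its product topology being generated by the submultiplicative (indeed multiplicative) seminorms $f\mapsto|f(g)|$, so that $\widehat{\mathbb{C}^G}=\mathbb{C}^G$ and hence $(\widehat{\mathbb{C} G})^\bullet=\mathbb{C}^G$ with no further completion. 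This is exactly where local finiteness does real work: in contrast to the finitely generated case, where $(\widehat{\mathbb{C} G})'$ is merely a (DF)-space whose Arens--Michael envelope is $\mathbb{C}^G$, here the strong dual already \emph{equals} $\mathbb{C}^G$.

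With both $(\mathbb{C}^G)^\bullet\cong\widehat{\mathbb{C} G}$ and $(\widehat{\mathbb{C} G})^\bullet\cong\mathbb{C}^G$ realized by the canonical maps, mutual holomorphic duality is established, and reflexivity follows formally: the composite $H^{\bullet\bullet}\to H$ is the canonical isomorphism for $H\in\{\mathbb{C}^G,\widehat{\mathbb{C} G}\}$.

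I expect the main obstacle to be the second half, and within it the promotion of the space-level identity $(\widehat{\mathbb{C} G})'\cong\mathbb{C}^G$ to an isomorphism of Hopf $\mathbin{\widehat{\otimes}}$-algebras. Concretely, one must check that strong duality commutes with the inductive limit not only for the underlying spaces but compatibly with $\mathbin{\widehat{\otimes}}$, so that the dualized comultiplication, counit and antipode match those of $\mathbb{C}^G$; nuclearity of all the spaces involved is what makes $\mathbin{\widehat{\otimes}}$ and strong duality behave well enough for this, and \cite[Proposition~6.7]{Pir_stbflat} together with the well-behaved formalism of \cite{Ar20+} is what I would invoke to conclude that the Hopf structure survives each operation. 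Verifying the identification $\widehat{\mathbb{C} G}=\varinjlim_n\mathbb{C} F_n$ at the level of topological algebras, rather than merely as spaces, is the other delicate point, but it should be available from \cite{Ar21+}.
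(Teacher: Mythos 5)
Your proposal is correct, and its skeleton agrees with the paper's: both halves reduce to the identifications $(\mathbb{C}^G)'\cong\mathbb{C} G$ and $\bigl(\widehat{\mathbb{C} G}\bigr)'\cong\mathbb{C}^G$, together with the observation that $\mathbb{C}^G$ is already an Arens--Michael algebra, so the envelope acts trivially where needed. The difference is how you obtain the second identification. The paper does it in one step: by \cite[Proposition~5]{Ar21+}, local finiteness forces $\mathbb{C} G\to\widehat{\mathbb{C} G}$ to be a \emph{topological} isomorphism, with $\mathbb{C} G$ carrying the strongest locally convex topology, after which $\bigl(\widehat{\mathbb{C} G}\bigr)'\cong(\mathbb{C} G)'\cong\mathbb{C}^G$ is immediate (the strong dual of a direct sum of lines is the product). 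You instead present $\widehat{\mathbb{C} G}\cong\varinjlim_n\mathbb{C} F_n$ over an exhausting chain of finite subgroups and dualize levelwise. Two remarks on that route. First, your citation is off: \cite[Theorems~2 and~6]{Ar21+} give only nuclearity and the (DF)-characterization (i.e., well-behavedness); the identification $\widehat{\mathbb{C} G}\cong\varinjlim_n\mathbb{C} F_n$ as topological algebras --- equivalently, that the envelope topology is the strongest locally convex one --- is precisely \cite[Proposition~5]{Ar21+}, so your argument consumes the same key external fact as the paper, merely repackaged; you correctly flagged this as the delicate point. Second, interchanging strong duality with the countable inductive limit is not automatic in general: it requires regularity of the system $(\mathbb{C} F_n)$, which holds here because an injective system of finite-dimensional steps is strict, hence regular \cite[Sec.\,24, Satz\,2.2]{FW68}, whence $\bigl(\varinjlim_n\mathbb{C} F_n\bigr)'\cong\varprojlim_n\mathbb{C}^{F_n}$ topologically \cite[Sec.\,25, Satz\,2.1]{FW68}. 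Amusingly, this levelwise-dualization technique is exactly what the paper deploys in the harder profinite case (Proposition~\ref{OGprAM}); for the locally finite case it is a longer road to the same destination, though it does buy you a transparent matching of the Hopf structures via finite-dimensional Hopf duality at each level, whereas the paper's proof, once \cite[Proposition~5]{Ar21+} is invoked, collapses to three lines.
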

(Since the topological properties of Hopf $\mathbin{\widehat{\otimes}}$-algebras are substantial in the definition of reflexivity, here we have to additionally assume that the group is countable, in contrast to the Abelian case, which
was described in terms of characters.)

\begin{proof}
Since $G$ is locally finite, the homomorphism $\mathbb{C} G\to\widehat{\mathbb{C} G}$  is a topological isomorphism according to \cite[Proposition~5]{Ar21+} (Here $\mathbb{C} G$  is equipped with the strongest locally convex topology). Then $ \bigl(\widehat{\mathbb{C} G}\bigr)'\cong (\mathbb{C} G)'\cong  \mathbb{C}^G$  (as locally convex spaces). Since $\mathbb{C}^G$ is an Arens--Michael algebra, it follows
that $(\widehat{\mathbb{C} G})^\bullet\cong \mathbb{C}^G$ (as a Hopf $\mathbin{\widehat{\otimes}}$-algebra). Similarly, it follows from $(\mathbb{C}^G)'\cong \mathbb{C} G$ that $(\mathbb{C}^G)^\bullet\cong \widehat{\mathbb{C} G}$.
\end{proof}

Thus, if $G$ is a locally finite countable group, then the reflexivity diagram becomes
$$
  \xymatrix{
\mathbb{C}^G \ar@{|->}[d]&&\mathbb{C}^G \ar@{|->}[ll]_{AM\,env}\\
\mathbb{C} G \ar@{|->}[rr]_{AM\,env}&&\mathbb{C} G \,.\ar@{|->}[u]
 }
$$

Since the locally finite and profinite groups are dual to each other in the Abelian case, it is natural to
assume that an assertion similar to Theorem~\ref{lf} holds also for profinite groups (not necessarily Abelian).
This is really the case, but auxiliary results are needed for the verification.

Let $G$ be a profinite group. As above, denote by $\mathcal{O}(G)$  the set of holomorphic functions on $G$. Since $\mathcal{O}(G)$ is an algebra of locally constant functions by Lemma~\ref{holloco}, it follows that the topology on this
algebra can be introduced in the standard way (see, e.g.,  \cite[Sec.\,5]{Pl19}). Namely, let $I$ be the family
of all possible partitions of $G$ into finitely many disjoint open subsets. For a given $\alpha\in I$, denote by $\mathcal{O}(G)_\alpha$ the algebra of functions that are constant on each of the open subsets of~$\alpha$. Since $\mathcal{O}(G)_\alpha$ is
finite dimensional, it can be regarded with the standard topology. Then $\mathcal{O}(G)$ is the union of the directed
family $(\mathcal{O}(G)_\alpha)$  and it can be equipped with the topology of the inductive limit. In addition, one can give a more sparing description of the topology by presenting $G$ as the projective limit of a directed family of
finite groups $(G_\alpha)$. Then $\mathcal{O}(G)_\alpha\cong \mathbb{C}^{G_\alpha}$  for every~$\alpha$, and $\mathcal{O}(G)$  is the inductive limit of the sequence of
finite-dimensional linear spaces $(\mathbb{C}^{G_\alpha})$.

By \cite[Theorem 7.2]{Ak08}, $\mathcal{O}(G^\bullet)\cong \mathcal{O}(G)^\bullet$  for every compactly generated Abelian Stein group $G$. We now prove a similar result concerning the consistency of the notions of holomorphic reflexivity for the groups and Hopf algebras in our case. First, Lemmas~\ref{holloco} and~\ref{lincom} immediately imply the following assertion.

\begin{lm}\label{proGbu}
Let $G$ be a profinite Abelian group. Then  $\mathcal{O}(G)\cong \mathbb{C}(G^\bullet)$.
\end{lm}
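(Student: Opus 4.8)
The plan is to write down an explicit candidate isomorphism and then verify it respects all the relevant structure. I would define $\Phi\colon \mathbb{C}(G^\bullet)\to\mathcal{O}(G)$ on the canonical basis of the group algebra by sending each group element $\chi\in G^\bullet$ to the function $\chi$ itself, and extend linearly. This map is well defined because, by Proposition~\ref{abprf}, every holomorphic character of $G$ is a locally constant function and hence lies in $\mathcal{O}(G)$ by Lemma~\ref{holloco}. So $\Phi$ is a linear map from the free vector space on $G^\bullet$ into the algebra of holomorphic functions.

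First I would check that $\Phi$ is a homomorphism of Hopf $\mathbin{\widehat{\otimes}}$-algebras. The product in $\mathbb{C}(G^\bullet)$ is determined on basis elements by the multiplication of $G^\bullet$, that is, by the pointwise product of characters, while the product in $\mathcal{O}(G)$ is the pointwise product of functions; since $(\chi_1\chi_2)(g)=\chi_1(g)\,\chi_2(g)$, the two multiplications agree and $\Phi$ is multiplicative. The identities $\chi(gh)=\chi(g)\,\chi(h)$, $\chi(e)=1$, and $\chi(g^{-1})=\chi(g)^{-1}$ show likewise that the group-like comultiplication, the counit, and the antipode of $\mathbb{C}(G^\bullet)$ are carried, respectively, to the comultiplication $f\mapsto\bigl((g,h)\mapsto f(gh)\bigr)$, the evaluation at the identity, and $f\mapsto\bigl(g\mapsto f(g^{-1})\bigr)$ on $\mathcal{O}(G)$. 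Thus the Hopf structure is preserved automatically.

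Next I would establish bijectivity, which is where the two cited lemmas do the real work. Surjectivity is precisely Lemma~\ref{lincom}: every locally constant, hence holomorphic, function on $G$ is a linear combination of holomorphic characters. Injectivity amounts to the linear independence of distinct characters of $G^\bullet$ regarded as functions on $G$, which is the classical Dedekind--Artin independence of characters. Hence $\Phi$ is a bijective homomorphism of Hopf algebras.

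Finally I would promote $\Phi$ to a topological isomorphism by means of the inductive-limit description recalled before the statement. Writing $G=\varprojlim G_\alpha$ with $G_\alpha$ finite, one has $\mathcal{O}(G)=\varinjlim\,\mathcal{O}(G)_\alpha$ with $\mathcal{O}(G)_\alpha\cong\mathbb{C}^{G_\alpha}$, while $G^\bullet=\varinjlim (G_\alpha)^\bullet$ so that $\mathbb{C}(G^\bullet)=\varinjlim\,\mathbb{C}\bigl((G_\alpha)^\bullet\bigr)$ with the finest locally convex topology. Finite Fourier analysis gives $\mathbb{C}\bigl((G_\alpha)^\bullet\bigr)\cong\mathbb{C}^{G_\alpha}$, and $\Phi$ restricts at each level to exactly this finite-dimensional identification. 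An isomorphism of directed systems of finite-dimensional spaces induces a topological isomorphism of the inductive limits, so $\Phi$ is a topological, and thus a Hopf $\mathbin{\widehat{\otimes}}$-algebra, isomorphism. I expect this last bookkeeping step, namely confirming that the character basis of $\mathbb{C}(G^\bullet)$ is matched level by level with the subspaces $\mathcal{O}(G)_\alpha$ so that both inductive-limit topologies agree, to be the only point requiring care; the algebraic and Hopf-theoretic content is immediate from the two lemmas as asserted.
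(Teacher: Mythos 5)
Your proposal is correct and follows essentially the same route as the paper, which simply observes that Lemma~\ref{holloco} (holomorphic $=$ locally constant) and Lemma~\ref{lincom} (locally constant functions are spanned by holomorphic characters) immediately yield the isomorphism. You merely make explicit what the paper leaves implicit --- the canonical map $\Phi$, injectivity via linear independence of characters, and the compatibility of the Hopf structures and inductive-limit topologies --- all of which check out.
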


To provide the desired topological properties of Hopf $\mathbin{\widehat{\otimes}}$-algebras, we also impose the countability condition below. Just like a locally finite group is countable if and only if it is a countable inductive limit of finite groups, a profinite group has a countable topology base if and only if it is a countable projective
limit of finite groups (this is easy to show; cf., e.g., \cite[p.\,11, Corollary 1.1.13]{RZ10}). Moreover, a locally
finite Abelian group is countable if and only if its holomorphically dual profinite group has a countable
topology base (this follows from  \cite[p.\,60, Lemma 2.9.3]{RZ10}).
Below, for the sake of unity of notation, we
also sometimes denote the set $\mathbb{C}^G$  of all functions on a discrete group $G$ by $\mathcal{O}(G)$.

\begin{pr}\label{TrF}
Let $G$ be a locally finite countable Abelian group or a profinite Abelian group
with countable base. Then $\mathcal{O}(G^\bullet)\cong \mathcal{O}(G)^\bullet$.
\end{pr}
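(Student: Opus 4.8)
The plan is to reduce both sides of the claimed isomorphism to a single, concretely described Hopf $\mathbin{\widehat{\otimes}}$-algebra, treating the two cases separately. Since the classes ``locally finite countable Abelian'' and ``profinite Abelian with countable base'' are interchanged by the functor $(-)^\bullet$ (Propositions~\ref{ablf} and~\ref{abprf}, together with the countability criteria recorded above), the two cases are dual to one another and rest on the same ingredients: the character duality of Section~\ref{absect}, in particular Corollary~\ref{abtref} and Lemma~\ref{proGbu}, and the already established Theorem~\ref{lf}.

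First I would suppose $G$ is locally finite countable Abelian. Then $G^\bullet$ is profinite with countable base, so Lemma~\ref{proGbu} applied to $G^\bullet$ gives $\mathcal{O}(G^\bullet)\cong\mathbb{C}(G^{\bullet\bullet})$, and Corollary~\ref{abtref} gives $G^{\bullet\bullet}\cong G$, whence $\mathcal{O}(G^\bullet)\cong\mathbb{C} G$. On the other side, $\mathcal{O}(G)=\mathbb{C}^G$, and Theorem~\ref{lf} yields $\mathcal{O}(G)^\bullet=(\mathbb{C}^G)^\bullet\cong\widehat{\mathbb{C} G}\cong\mathbb{C} G$, the last isomorphism holding because $G$ is locally finite. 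Thus both sides equal $\mathbb{C} G$. Next I would suppose $G$ is profinite Abelian with countable base; then $G^\bullet$ is locally finite countable discrete, so $\mathcal{O}(G^\bullet)=\mathbb{C}^{G^\bullet}$. By Lemma~\ref{proGbu}, $\mathcal{O}(G)\cong\mathbb{C}(G^\bullet)$, the group algebra of the locally finite countable group $G^\bullet$; applying Theorem~\ref{lf} to $G^\bullet$ then gives $\mathcal{O}(G)^\bullet\cong\bigl(\mathbb{C}(G^\bullet)\bigr)^\bullet\cong\mathbb{C}^{G^\bullet}$, so again the two sides agree.

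The routine part is this modular bookkeeping; the point requiring care is that every identification above must be an isomorphism of Hopf $\mathbin{\widehat{\otimes}}$-algebras, not merely of topological algebras or locally convex spaces. Concretely, I would verify that the isomorphism of Lemma~\ref{proGbu} intertwines the pointwise product and the coproduct $\Delta f(g,h)=f(gh)$ of $\mathcal{O}(G)$ with the group-algebra structure of $\mathbb{C}(G^\bullet)$: this holds because each holomorphic character $\chi$ satisfies $\Delta\chi=\chi\otimes\chi$ (so it is group-like) and the product of two characters is their product in $G^\bullet$. I would also check that the inductive-limit topology on $\mathcal{O}(G)$ coincides with the strongest locally convex topology on $\mathbb{C}(G^\bullet)$, which is exactly where the countability hypotheses enter (both spaces become countable inductive limits of finite-dimensional ones). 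Finally, the group isomorphism $G^{\bullet\bullet}\cong G$ of Corollary~\ref{abtref}, being topological, induces a Hopf $\mathbin{\widehat{\otimes}}$-algebra isomorphism of the associated group algebras, and the function/group duality $(\mathbb{C}^G)'\cong\mathbb{C} G$ invoked inside Theorem~\ref{lf} swaps product and coproduct in the expected way.

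I expect the main obstacle to be precisely this compatibility of the Hopf $\mathbin{\widehat{\otimes}}$-algebra structures and of the two topologies along the chain of identifications, rather than any single hard estimate. Once these compatibilities are in place, the displayed chains become isomorphisms of Hopf $\mathbin{\widehat{\otimes}}$-algebras and the proposition follows.
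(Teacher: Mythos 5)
Your proposal is correct and follows essentially the same route as the paper's own proof: both cases are handled by the identical chain of identifications via Lemma~\ref{proGbu}, Corollary~\ref{abtref}, and Theorem~\ref{lf} (with $\widehat{\mathbb{C} G}\cong\mathbb{C} G$ for locally finite $G$). Your additional verifications---that characters are group-like, that the isomorphism of Lemma~\ref{proGbu} respects the Hopf $\mathbin{\widehat{\otimes}}$-algebra structures, and that the inductive-limit topology on $\mathcal{O}(G)$ is the strongest locally convex one---are sound elaborations the paper leaves implicit (the topological point is its Lemma~\ref{OGstrt}), not a different argument.
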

\begin{proof}
Assume that G is locally finite and countable. Then $G^\bullet$ is profinite according to Proposition~\ref{ablf}.
Lemma~\ref{proGbu} implies that $\mathcal{O}(G^\bullet)$ coincides with $\mathbb{C} (G^{\bullet\bullet})$. It follows from Corollary~\ref{abtref} that $ G^{\bullet\bullet}\cong G$, and
thus $\mathcal{O}(G^\bullet)\cong\mathbb{C} G$.
Since $G$ is discrete, we obtain $\mathcal{O}(G)=\mathbb{C}^G$, and, since $G$ is countable, it follows from
Theorem~\ref{lf} that $\mathbb{C} G\cong (\mathbb{C}^G)^\bullet=\mathcal{O}(G)^\bullet$. Thus, $\mathcal{O}(G^\bullet)\cong \mathcal{O}(G)^\bullet$.

Assume that $G$ is profinite and has a countable base. Then $G^\bullet$ is locally finite by Proposition~\ref{abprf},  and
thus $\mathcal{O}(G^\bullet)=\mathbb{C}^{G^\bullet}$. On the other hand, $\mathbb{C}(G^\bullet)\cong \mathcal{O}(G)$ by Lemma~\ref{proGbu}.  Since $G$ has a countable base,
it follows that $G^\bullet$  is countable. By Theorem~\ref{lf} we have $ (\mathbb{C}(G^\bullet))^\bullet \cong   \mathbb{C}^{G^\bullet}$.  Thus, $\mathcal{O}(G^\bullet)\cong \mathcal{O}(G)^\bullet$.
\end{proof}

The key point needed to be verified to prove an analogue of Theorem~\ref{lf}  for a profinite group $G$ is
that $\mathcal{O}(G)$ and $\mathcal{O}(G)'$ are Arens--Michael algebras, and thus the Arens--Michael envelope functor acts
trivially both on $\mathcal{O}(G)$ and on $\mathcal{O}(G)'$.

\begin{lm}\label{OGstrt}
 Let G be a profinite group with countable base. Then the topology on $\mathcal{O}(G)$ coincides with the strongest locally convex topology.
\end{lm}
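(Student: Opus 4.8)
The plan is to use the concrete description of the topology on $\mathcal{O}(G)$ given immediately before the statement, together with the standard characterization of the strongest locally convex topology as an inductive limit of finite-dimensional subspaces. Since the strongest locally convex topology majorizes every locally convex topology on a given vector space, one inclusion is automatic: the topology of $\mathcal{O}(G)$ is no finer than the strongest locally convex topology. Hence it suffices to prove that every seminorm on $\mathcal{O}(G)$ is continuous in the topology described above, for this forces the two topologies to have the same system of continuous seminorms and therefore to coincide.

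First I would record that $\mathcal{O}(G)$ is the union of the directed family $(\mathcal{O}(G)_\alpha)_{\alpha\in I}$ of its finite-dimensional subalgebras and carries the corresponding locally convex inductive limit topology. Directedness is clear, since the common refinement of two partitions in $I$ is again in $I$ and its algebra contains both of the given ones; that the family exhausts $\mathcal{O}(G)$ follows from Lemma~\ref{holloco}, because a locally constant function $f$ on $G$ is constant on the cosets of some open normal subgroup of finite index, and these cosets form a partition $\alpha\in I$ with $f\in\mathcal{O}(G)_\alpha$.

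The main step is then short. Let $p$ be an arbitrary seminorm on $\mathcal{O}(G)$. For each $\alpha$ the restriction of $p$ to the finite-dimensional space $\mathcal{O}(G)_\alpha$ is automatically continuous, as every seminorm on a finite-dimensional space is continuous. By the very definition of the locally convex inductive limit topology, a seminorm on $\mathcal{O}(G)$ is continuous precisely when its restriction to each $\mathcal{O}(G)_\alpha$ is continuous; hence $p$ is continuous on $\mathcal{O}(G)$. Since $p$ was arbitrary, every seminorm on $\mathcal{O}(G)$ is continuous, which is exactly the defining property of the strongest locally convex topology. Combined with the automatic inclusion above, this yields the claimed coincidence.

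I do not expect a genuine obstacle: the argument is a routine application of the inductive-limit description and of the fact that a finite-dimensional space carries a unique Hausdorff locally convex topology. The only point deserving care is the verification that the exhausting family $(\mathcal{O}(G)_\alpha)$ consists of finite-dimensional subspaces covering all of $\mathcal{O}(G)$, which is precisely where Lemma~\ref{holloco} enters. The countable-base hypothesis is not actually needed for this particular statement --- it merely lets one replace the net $(\mathcal{O}(G)_\alpha)$ by a sequence --- but it is retained for consistency with the (DF)-space conclusions used afterward.
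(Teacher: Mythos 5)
Your proof is correct, but it runs along a different track than the paper's. The paper first uses the countable-base hypothesis to present $\mathcal{O}(G)$ as the inductive limit of a \emph{sequence} $(\mathbb{C}^{G_n})$ of finite-dimensional subspaces, then chooses a countable linear basis compatible with that sequence, identifies the inductive topology with the locally convex direct-sum topology on the resulting sum of one-dimensional subspaces, and concludes by citing Schaefer's result that this direct-sum topology is the strongest locally convex one. You instead work with the full directed family $(\mathcal{O}(G)_\alpha)_{\alpha\in I}$ of finite-dimensional subspaces and argue seminorm by seminorm: every seminorm is continuous on each finite-dimensional $\mathcal{O}(G)_\alpha$, hence continuous on the inductive limit by its universal property, and a locally convex topology in which all seminorms are continuous must be the strongest one. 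This avoids both the choice of a basis and the cofinality argument implicit in passing from the directed family to the sequence, and it needs no external reference. It also makes transparent your (correct) side observation that countability of the base is irrelevant to this particular lemma --- it is only needed later, e.g., for the (DF)-property in Proposition~\ref{OGHopf} and for the appeal to a sequence of normed algebras in Proposition~\ref{OGAM}. What the paper's route buys in exchange is the explicit direct-sum decomposition of $\mathcal{O}(G)$, which is reused almost verbatim in the proof of Lemma~\ref{prfOO} (duality of sums and products, commutation of $\mathbin{\widehat{\otimes}}$ with products), so its basis-and-direct-sum formulation is not an accident but a preparation for that later argument. The one step in your write-up worth spelling out in a formal version is the characterization you invoke: a seminorm $p$ on a locally convex inductive limit is continuous if its restriction to each piece is continuous, because the absolutely convex absorbing set $\{p\le 1\}$ then meets each $\mathcal{O}(G)_\alpha$ in a zero-neighborhood, and such sets form a base at zero for the inductive topology; this is standard but is the precise point where the universal property is used.
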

\begin{proof}
Presenting $G$ as the projective limit of a sequence $(G_n)$ of finite groups, we see that $\mathcal{O}(G)$  is
the inductive limit of a sequence  $(\mathbb{C}^{G_n})$ of finite-dimensional linear spaces,  which can be regarded as
subspaces of $\mathcal{O}(G)$.  Choose in $\mathcal{O}(G)$  a countable linear basis consistent with the sequence $(\mathbb{C}^{G_n})$  and take the corresponding decomposition of   $\mathcal{O}(G)$ into a direct sum of one-dimensional subspaces. Then the inductive topology on  $\mathcal{O}(G)$  coincides with the direct sum topology, which, in turn, is the strongest locally convex one; see \cite{Sc71}, Sec.\,II.6, Russian pp.\,74--75, (6.2)  and an example after it.
\end{proof}

Note that the question of when an algebra endowed with the inductive topology has continuous
multiplication and, in particular, is an Arens--Michael algebra turned out to be rather subtle (see a
discussion in \cite{HW04}).  However, there are no problems in our special case, namely, the following assertion
holds.

\begin{pr}\label{OGAM}
 Let $G$ be a profinite group with  countable base. Then $\mathcal{O}(G)$ is an Arens--Michael
algebra
\end{pr}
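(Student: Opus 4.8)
The plan is to use Lemma~\ref{OGstrt}: the topology on $\mathcal{O}(G)$ is the strongest locally convex topology, so $\mathcal{O}(G)$ is a countable locally convex direct sum of lines (it is $\varinjlim \mathbb{C}^{G_n}$ with the $\mathbb{C}^{G_n}$ spanning an increasing chain of finite-dimensional subalgebras), and as such it is complete. Being an Arens--Michael algebra then reduces to local $m$-convexity, i.e. to showing that the continuous submultiplicative prenorms generate the topology. Since \emph{every} prenorm is continuous here, it suffices to exhibit one family of submultiplicative prenorms that already generates the strongest locally convex topology. I would produce these as weighted $\ell^1$-prenorms relative to a well-chosen linear basis of $\mathcal{O}(G)$ consisting of idempotents.

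The heart of the argument is the choice of basis. Writing $G=\varprojlim G_n$ and $K_n=\ker(G\to G_n)$, the cosets of the $K_n$ are clopen, and by Lemma~\ref{holloco} their indicator functions $\mathbf{1}_U$ lie in $\mathcal{O}(G)$. The decisive elementary fact is that any two such cosets are either nested or disjoint (if $K_m\supseteq K_n$ and $gK_m\cap hK_n\neq\varnothing$, then $hK_n\subseteq gK_m$). Arranging the cosets into a tree under refinement and discarding, at each node, one child (whose indicator is the parent's minus its siblings'), together with the constant function $\mathbf{1}_G$, yields a countable linear basis $\mathcal{B}$ of $\mathcal{O}(G)$ made up of idempotents $\mathbf{1}_U$. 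The point of this choice is multiplicativity: for $b,b'\in\mathcal{B}$ we have $bb'=\mathbf{1}_{U\cap U'}$, which by nestedness-or-disjointness equals $0$ or the indicator of the smaller set, i.e. $bb'\in\{0,b,b'\}\subseteq\{0\}\cup\mathcal{B}$. Thus all structure constants of $\mathcal{B}$ lie in $\{0,1\}$ and each product of basis vectors is again a basis vector or zero.

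Granting this basis, for any weight $w\colon\mathcal{B}\to[1,\infty)$ I would set $q_w\bigl(\sum_b\lambda_b b\bigr)=\sum_b|\lambda_b|\,w(b)$. Each $q_w$ is a genuine prenorm (finite, since sums are finite), and submultiplicativity is immediate from the basis: the inequality $\sum_k|c^k_{ij}|\,w(b_k)\le w(b_i)w(b_j)$ reduces, in every nonzero case, to $w(b_i)\le w(b_i)w(b_j)$ or $w(b_j)\le w(b_i)w(b_j)$, which holds because $w\ge 1$. Finally, the balls $\{q_w\le 1\}$ are exactly the standard absolutely convex absorbing neighbourhoods that form a base at $0$ for the strongest locally convex topology on $\bigoplus_{\mathcal{B}}\mathbb{C}$: given any absolutely convex absorbing $U$, choosing $\delta_b>0$ with $\delta_b b\in U$ and taking $w(b)=\max(1,\delta_b^{-1})$ gives $\{q_w\le1\}\subseteq U$. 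Hence the submultiplicative prenorms $q_w$ generate the topology of $\mathcal{O}(G)$, so $\mathcal{O}(G)$ is locally $m$-convex; being complete, it is an Arens--Michael algebra.

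I expect the only real obstacle to be the construction of the idempotent basis $\mathcal{B}$ with the multiplicative property, which rests entirely on the nested-or-disjoint dichotomy for cosets of the $K_n$ and on pruning one child per node to remove linear dependencies; once this is in place, submultiplicativity of the $q_w$ and their generation of the strongest locally convex topology are routine. The completeness of $\mathcal{O}(G)$ needs only the standard fact that a countable locally convex direct sum of complete spaces (here $\bigoplus_{\mathbb{N}}\mathbb{C}$) is complete, so no extra work is required there.
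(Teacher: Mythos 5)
Your proof is correct, but it takes a genuinely different route from the paper. The paper's proof is a two-line affair: it invokes \cite[Th\'eor\`eme~2.1]{AN96}, which says that the locally convex inductive limit topology of a \emph{sequence of normed algebras} can always be given by a family of submultiplicative prenorms, and then gets completeness from Lemma~\ref{OGstrt} exactly as you do. You instead replace the citation by a self-contained construction special to the profinite situation: the nested-or-disjoint dichotomy for cosets of the kernels $K_n$ yields, after pruning one child per node of the refinement tree, an idempotent basis $\mathcal{B}$ with $bb'\in\{0,b,b'\}$, and then the weighted $\ell^1$-prenorms $q_w$ with $w\ge 1$ are submultiplicative and their unit balls form a neighbourhood base for the strongest locally convex topology. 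All the steps check out: the pruned family is a basis (a telescoping count shows the retained elements of levels $\le n$ number $|G_n|$ and span $\mathbb{C}^{G_n}$), the estimate $w(b_ib_j)\le\max(w(b_i),w(b_j))\le w(b_i)w(b_j)$ gives submultiplicativity, and the choice $w(b)=\max(1,\delta_b^{-1})$ places $\{q_w\le 1\}$ inside any absolutely convex absorbing set; completeness of a locally convex direct sum of lines is standard, as in the paper. The only hygiene point is to fix a \emph{decreasing} chain $K_1\supseteq K_2\supseteq\cdots$ of open normal subgroups (available since $G$ has countable base, as in the paper's presentation $G=\varprojlim G_n$), and to note that repeated subgroups $K_{n+1}=K_n$ contribute nothing, which is harmless. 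The trade-off: the paper's argument is shorter and applies verbatim to any countable inductive system of normed algebras, while yours is elementary and citation-free, exploits the combinatorics of profinite groups, and produces an explicit generating family of submultiplicative norms on $\mathcal{O}(G)$ rather than an abstract existence statement.
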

\begin{proof}
By  \cite[Th{e}or{e}m~2.1]{AN96}, the inductive limit topology (in the category of locally convex spaces) in
the case of a sequence of normed algebras can be given by a family of submultiplicative prenorms. To
ensure that an Arens--Michael algebra is obtained, it remains to verify the completeness. By Lemma~\ref{OGstrt},  the topology on $\mathcal{O}(G)$ is the strongest locally convex one, whence the completeness follows.
\end{proof}

We now show that $\mathcal{O}(G)$  is a Hopf $\mathbin{\widehat{\otimes}}$-algebra.

\begin{lm}\label{prfOO}
Let $G$ be a profinite group with countable base. Then $\mathcal{O}(G)\otimes \mathcal{O}(G) \cong \mathcal{O}(G\times G)$  as a linear space, and $\mathcal{O}(G)\mathbin{\widehat{\otimes}} \mathcal{O}(G) \cong \mathcal{O}(G\times G) $ как as a locally convex space.
\end{lm}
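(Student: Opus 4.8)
The plan is to prove the purely algebraic identity first and then to upgrade it to a topological one by showing that both sides carry the strongest locally convex topology, so that the completion in $\mathbin{\widehat{\otimes}}$ is in fact redundant.

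First I would present $G$ as the projective limit of a sequence $(G_n)$ of finite groups, so that, as in Lemma~\ref{OGstrt}, $\mathcal{O}(G)=\bigcup_n \mathbb{C}^{G_n}$, functions on a finite quotient being identified with their pullbacks. The same applies to $G\times G\cong\varprojlim_n(G_n\times G_n)$, the diagonal system $(G_n\times G_n)$ being readily seen to be cofinal among the finite quotients of $G\times G$; hence $\mathcal{O}(G\times G)=\bigcup_n\mathbb{C}^{G_n\times G_n}$. The map to analyze is the exterior product $\theta\colon\mathcal{O}(G)\otimes\mathcal{O}(G)\to\mathcal{O}(G\times G)$, $f\otimes g\mapsto\bigl((x,y)\mapsto f(x)g(y)\bigr)$. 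For each $n$ it restricts to the canonical isomorphism $\mathbb{C}^{G_n}\otimes\mathbb{C}^{G_n}\xrightarrow{\sim}\mathbb{C}^{G_n\times G_n}$ of function spaces on finite sets. Since the algebraic tensor product commutes with filtered colimits and the diagonal is cofinal in $\mathbb{N}\times\mathbb{N}$, passing to the union over $n$ exhibits $\theta$ as a colimit of isomorphisms and hence as a linear isomorphism. This gives the first assertion.

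For the topological statement I would invoke Lemma~\ref{OGstrt} twice, for $G$ and for the profinite countable-base group $G\times G$: both $\mathcal{O}(G)$ and $\mathcal{O}(G\times G)$ then carry the strongest locally convex topology, i.e. are locally convex direct sums of one-dimensional spaces. The key point is that the (uncompleted) projective tensor product commutes with locally convex direct sums, so that $\mathcal{O}(G)\otimes_\pi\mathcal{O}(G)\cong\bigoplus_{i,j}(\mathbb{C}\otimes_\pi\mathbb{C})=\bigoplus_{i,j}\mathbb{C}$ again carries the direct sum topology, which is the strongest locally convex one. As a locally convex direct sum of complete spaces this is already complete, whence $\mathcal{O}(G)\mathbin{\widehat{\otimes}}\mathcal{O}(G)=\mathcal{O}(G)\otimes_\pi\mathcal{O}(G)$ also carries the strongest locally convex topology. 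Finally, any linear bijection between two spaces with the strongest locally convex topology is automatically a homeomorphism, since both the map and its inverse are defined on a space with the finest locally convex topology and so are continuous; thus the algebraic isomorphism $\theta$ is simultaneously a topological isomorphism $\mathcal{O}(G)\mathbin{\widehat{\otimes}}\mathcal{O}(G)\cong\mathcal{O}(G\times G)$.

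The main obstacle is precisely the interchange of $\mathbin{\widehat{\otimes}}$ with the direct-sum structure, that is, checking that no genuine completion takes place. This rests on two standard but essential inputs: that $\otimes_\pi$ commutes with locally convex direct sums, and that a locally convex direct sum of complete spaces is complete. Once these are secured, the identification $\mathbin{\widehat{\otimes}}=\otimes_\pi$ and the strongest-topology conclusion are immediate, and the rest is formal. One could alternatively phrase the completeness step through the nuclear $(DF)$-space structure of $\mathcal{O}(G)$, but the direct-sum argument is the most self-contained.
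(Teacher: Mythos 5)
Your proof is correct in substance, but it takes a genuinely different route from the paper's, and one of your ``standard inputs'' needs a countability caveat to be true. On the algebraic part you and the paper do essentially the same thing (the paper proves injectivity of $\psi$ by reduction to finite sets and surjectivity by factoring a locally constant function on $G\times G$ through some $G_n\times G_n$; your colimit-and-cofinality phrasing is an equivalent repackaging). The divergence is in the topological part: the paper never touches $\otimes_\pi$ on the direct-sum side. Instead it invokes the duality isomorphism $\mathcal{O}(G)\mathbin{\widehat{\otimes}}\mathcal{O}(G)\cong(\mathcal{O}(G)'\mathbin{\widehat{\otimes}}\mathcal{O}(G)')'$, uses that the strong dual of a locally convex direct sum is a direct product \cite[Sec.\,22.5]{Kot1} and that the \emph{completed} projective tensor product commutes with direct products \cite[Sec.\,41.6]{Kot2}, and then dualizes back to conclude that $\mathcal{O}(G)\mathbin{\widehat{\otimes}}\mathcal{O}(G)$ is again a direct sum of lines. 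Your approach stays on the direct-sum side throughout and shows no completion occurs; it is more self-contained (the paper's duality isomorphism is itself asserted without proof and rests on nuclearity), at the price of needing the sum-commutation fact you cite.

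That fact is where care is required: the unqualified statement ``$\otimes_\pi$ commutes with locally convex direct sums'' is \emph{false} for uncountable families. For an uncountable index set $I$, the canonical bilinear map from the product of two copies of $\bigoplus_I\mathbb{C}$ (finest locally convex topology) into $\bigoplus_{I\times I}\mathbb{C}$ fails to be jointly continuous: joint continuity amounts to factoring an arbitrary positive matrix as $\varepsilon_i\delta_j\le\eta_{ij}$, which is impossible in general when $I$ is uncountable. What you actually need, and what is true, is the countable case: if $E$ and $F$ have countable dimension and carry the finest locally convex topology, then every bilinear map on $E\times F$ is jointly continuous, so $E\otimes_\pi F$ again carries the finest locally convex topology. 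This follows from the explicit factorization: given $\eta_{ij}>0$ for $i,j\in\mathbb{N}$, put $\delta_j:=\min_{i\le j}\eta_{ij}$ and $\varepsilon_i:=\min\bigl(1,\min_{j<i}\eta_{ij}/\delta_j\bigr)$; then $\varepsilon_i\delta_j\le\eta_{ij}$ for all $i,j$, which produces the required neighborhoods $U,V$ with $\Gamma(U\otimes V)\subseteq W$. Since $G$ has a countable base, $\mathcal{O}(G)$ is a countable union of finite-dimensional subspaces and hence of countable dimension, so your application is legitimate --- but the countable-base hypothesis is exactly what makes this step work and should be invoked there explicitly. Your remaining ingredients --- completeness of locally convex direct sums of complete spaces (so that $\mathbin{\widehat{\otimes}}=\otimes_\pi$ here) and the automatic bicontinuity of a linear bijection between two spaces with the finest locally convex topology --- are sound as stated.
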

\begin{proof}
We use Lemma~\ref{holloco}.
First, we claim that the mapping $\psi\!:\mathcal{O}(G)\otimes \mathcal{O}(G) \to\mathcal{O}(G\times G) $  is well
defined. Indeed, any element of the tensor product is a finite sum of elementary tensors $f_i\otimes h_i$.
Since all $f_i$ and $h_i$  are locally constant, it follows that the function on $G\times G$  corresponding to this sum is locally
constant.

Note that $\psi$ is injective (this follows from the fact that the analogous mapping for functions on finite
sets is injective). On the other hand, represent $G$ as the projective limit of a sequence of finite groups
 $(G_n)$. Then $G\times G$ is the projective limit of the sequence  $(G_n\times G_n)$. Therefore, every locally constant
function on $G\times G$  is constant on cosets of the kernel of the homomorphism  $G\times G\to G_n\times G_n$  for
some $n$. This readily implies that $\psi$  is surjective. Thus, $\psi$  is an isomorphism of linear spaces.

By Lemma~\ref{OGstrt}, the topology on the space $\mathcal{O}(G)$ coincides with the strongest locally convex topology,
which means that this space can be represented as a direct sum of one-dimensional subspaces. Further,
we use the fact that there is an isomorphism
$$
\mathcal{O}(G)\mathbin{\widehat{\otimes}} \mathcal{O}(G)\cong (\mathcal{O}(G)'\mathbin{\widehat{\otimes}} \mathcal{O}(G)')'.
$$
Since the strong dual of a sum is the product and conversely \cite[Sec.\,22.5, p.\,287]{Kot1}, and the
projective tensor product commutes with direct products  \cite[Sec.\,41.6, p.\,194, (5)]{Kot2}, it follows that $\mathcal{O}(G)\mathbin{\widehat{\otimes}} \mathcal{O}(G)$  is also a direct sum of one-dimensional subspaces, and thus its topology is the strongest
locally convex one. Since $\mathcal{O}(G\times G) $ is also equipped with the strongest locally convex topology, we have
a topological isomorphism
$\mathcal{O}(G)\mathbin{\widehat{\otimes}} \mathcal{O}(G) \cong \mathcal{O}(G\times G) $.
\end{proof}

\begin{pr}\label{OGHopf}
Let $G$ be a profinite group with countable base. Then $\mathcal{O}(G)$  is a commutative
well-behaved Hopf $\mathbin{\widehat{\otimes}}$-algebra with respect to standard operations for a function algebra on a
group.
\end{pr}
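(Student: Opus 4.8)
The plan is to exhibit on $\mathcal{O}(G)$ the standard Hopf algebra operations attached to a group and then to argue that, thanks to the topological facts already established, the purely algebraic verification of the Hopf axioms suffices. Concretely, I would take for the multiplication the pointwise product, for the unit the constant function~$1$, for the comultiplication the map $\Delta$ dual to the group law, $(\Delta f)(g,h)=f(gh)$, for the counit the evaluation $\varepsilon(f)=f(e)$ at the identity, and for the antipode $(Sf)(g)=f(g^{-1})$. Each of these sends locally constant functions to locally constant functions, because the structure maps of $G$ (multiplication, inversion, and the inclusion of the unit) are continuous and $G$ is profinite; in particular, by Lemma~\ref{prfOO} the comultiplication may be regarded as a map $\mathcal{O}(G)\to\mathcal{O}(G\times G)\cong\mathcal{O}(G)\mathbin{\widehat{\otimes}}\mathcal{O}(G)$.

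First I would record the algebraic Hopf algebra structure. Writing $G$ as the projective limit of a sequence of finite groups $(G_n)$, each $\mathbb{C}^{G_n}$ is a finite-dimensional commutative Hopf algebra, namely the dual of the group algebra $\mathbb{C} G_n$, and the transition maps (pullback along the surjections) respect all the operations; hence the colimit $\mathcal{O}(G)=\varinjlim\mathbb{C}^{G_n}$ is a commutative Hopf algebra over $\mathbb{C}$ with exactly the operations listed above. The decisive point is then that the algebraic and the completed tensor products agree: by Lemma~\ref{prfOO} the canonical map $\mathcal{O}(G)\otimes\mathcal{O}(G)\to\mathcal{O}(G)\mathbin{\widehat{\otimes}}\mathcal{O}(G)$ is an isomorphism, both sides being identified with $\mathcal{O}(G\times G)$. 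Consequently the algebraic structure maps are already maps between the correct $\mathbin{\widehat{\otimes}}$-spaces, and they are automatically continuous: by Lemma~\ref{OGstrt} the space $\mathcal{O}(G)$, and, by Lemma~\ref{prfOO} again, each of its tensor powers, carries the strongest locally convex topology, on which every linear map is continuous. Thus no separate continuity estimates are needed, and the commutative-diagram identities expressing associativity, coassociativity, the bialgebra compatibility, and the antipode axiom hold in the topological category precisely because they hold algebraically.

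The step that needs a little care, and which I regard as the main (though modest) obstacle, is coassociativity together with the bialgebra compatibility, since these involve the triple tensor power. Here I would first extend Lemma~\ref{prfOO} to $\mathcal{O}(G)^{\mathbin{\widehat{\otimes}} 3}\cong\mathcal{O}(G\times G\times G)$, which is immediate from associativity of $\mathbin{\widehat{\otimes}}$ together with two applications of that lemma, so that all the relevant diagrams live over genuine function spaces $\mathcal{O}(G^{k})$ and reduce to the evident pointwise identities such as $f\bigl((gh)k\bigr)=f\bigl(g(hk)\bigr)$. Commutativity of $\mathcal{O}(G)$ is immediate from the pointwise product. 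Finally, for well-behavedness I would note that $\mathcal{O}(G)$ is a complete nuclear (DF)-space: by Lemma~\ref{OGstrt} it is, as a locally convex space, a countable direct sum of lines, hence nuclear, and it is the strong dual of the nuclear Fr\'echet space $\mathcal{O}(G)'\cong\prod_n\mathbb{C}$, so it is a complete nuclear (DF)-space. Combined with the previous paragraph, this shows that $\mathcal{O}(G)$ is a commutative well-behaved Hopf $\mathbin{\widehat{\otimes}}$-algebra.
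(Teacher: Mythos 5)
Your proposal is correct and follows essentially the same route as the paper: the paper's proof likewise obtains the comultiplication from Lemma~\ref{prfOO}, derives continuity of all operations from the strongest locally convex topology of Lemma~\ref{OGstrt}, leaves the Hopf axioms to direct verification (your realization of $\mathcal{O}(G)$ as the colimit of the finite-dimensional Hopf algebras $\mathbb{C}^{G_n}$ just makes this explicit), and gets well-behavedness from the same countable-dimension nuclear (DF) observation, the only variation being that the paper cites Proposition~\ref{OGAM} for the $\mathbin{\widehat{\otimes}}$-algebra structure while you deduce it directly from the topology. One pedantic point: your second application of Lemma~\ref{prfOO} for the triple tensor power really needs the two-group variant $\mathcal{O}(G\times G)\mathbin{\widehat{\otimes}}\mathcal{O}(G)\cong\mathcal{O}(G\times G\times G)$, which is not literally what the lemma states but follows from its proof verbatim.
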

\begin{proof}
It follows from Proposition~\ref{OGAM} that $\mathcal{O}(G)$ is a $\mathbin{\widehat{\otimes}}$-algebra. By Lemma~\ref{prfOO}, the comultiplication is
well defined. Since $\mathcal{O}(G)$  is the set of locally constant functions (Lemma~\ref{holloco}),  it is invariant with respect to the antipode. The continuity of the operations follows from the fact that $\mathcal{O}(G)$ is regarded with the strongest locally convex topology (Lemma~\ref{OGstrt}).  The axioms of Hopf $\mathbin{\widehat{\otimes}}$-algebra are verified directly. Since the topology is the strongest locally convex one, it follows that $\mathcal{O}(G)$ is a nuclear (DF)-space, which means that $\mathcal{O}(G)$ is a well-behaved Hopf $\mathbin{\widehat{\otimes}}$-algebra.
\end{proof}

\begin{pr}\label{OGprAM}
 Let $G$ be a profinite group with countable base. Then  $\mathcal{O}(G)'$  is an Arens--Michael
algebra.
\end{pr}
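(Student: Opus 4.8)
The plan is to realise $\mathcal{O}(G)'$ as a countable projective limit of finite-dimensional group algebras and to recognise such a limit as an Arens--Michael algebra. First I would use the countable base to present $G$ as the projective limit of a sequence $(G_n)$ of finite groups with surjective connecting homomorphisms, exactly as in the proofs of Lemmas~\ref{OGstrt} and~\ref{prfOO}. By Lemma~\ref{holloco} this yields $\mathcal{O}(G)=\varinjlim_n \mathbb{C}^{G_n}$, where each $\mathbb{C}^{G_n}=\mathcal{O}(G_n)$ is a finite-dimensional algebra under pointwise multiplication and the bonding maps $\mathbb{C}^{G_n}\hookrightarrow\mathbb{C}^{G_{n+1}}$ are the pullbacks along the surjections $G_{n+1}\to G_n$. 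A short check (as in Lemma~\ref{prfOO}, using that the comultiplication is $(\Delta f)(x,y)=f(xy)$) shows that each $\mathbb{C}^{G_n}$ is a sub-bialgebra, so that the coalgebra structure is compatible with the inductive system.

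Next I would dualise. Since the strong dual of an inductive limit is the projective limit of the strong duals, and since the dual of the pointwise-multiplication algebra $\mathbb{C}^{G_n}$ is the group (convolution) algebra $\mathbb{C} G_n$, this gives an identification $\mathcal{O}(G)'\cong\varprojlim_n \mathbb{C} G_n$ as a topological algebra, the connecting maps $\mathbb{C} G_{n+1}\to\mathbb{C} G_n$ being the algebra homomorphisms induced by the group surjections $G_{n+1}\to G_n$. Here one uses Lemma~\ref{OGstrt}: because $\mathcal{O}(G)$ carries the strongest locally convex topology and is countable-dimensional, its strong dual is a nuclear Fr\'echet space whose topology coincides with the projective-limit topology on $\varprojlim_n \mathbb{C} G_n$ (the strong dual of a direct sum is the product, \cite{Kot1}).

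Finally I would equip each finite-dimensional $\mathbb{C} G_n$ with the $\ell^1$-norm relative to its group basis. This norm is submultiplicative, and each connecting homomorphism is contractive for it; hence the pullbacks of these norms to $\varprojlim_n \mathbb{C} G_n$ form a directed family of submultiplicative prenorms defining its topology. Since the space is complete (a projective limit of Banach algebras, equivalently the strong dual of a (DF)-space), it follows that $\mathcal{O}(G)'$ is a complete algebra whose topology is given by submultiplicative prenorms, i.e. an Arens--Michael algebra.

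The main obstacle will be the identification in the second step: carefully matching the multiplication on $\mathcal{O}(G)'$, which is dual to the comultiplication of the Hopf $\mathbin{\widehat{\otimes}}$-algebra $\mathcal{O}(G)$ (Proposition~\ref{OGHopf}), with the convolution product of the projective limit $\varprojlim_n \mathbb{C} G_n$, and simultaneously checking that the strong dual topology agrees with the projective-limit topology rather than merely with the weak-$*$ one. Once this identification is secured, the submultiplicativity of the $\ell^1$-norms and the completeness of the limit make the conclusion routine.
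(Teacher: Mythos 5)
Your proposal is correct and takes essentially the same route as the paper: both realize $\mathcal{O}(G)'$ as the countable projective limit of the finite-dimensional group algebras $\mathbb{C} G_n$ obtained by dualizing the inductive system $(\mathbb{C}^{G_n})$, equip them with submultiplicative $\ell^1$-norms, and reduce everything to checking that the strong dual topology agrees with the projective-limit topology. The only (minor) divergence is in that last check: the paper invokes regularity of the inductive system $(\mathbb{C}^{G_n})$ (strictness from finite-dimensionality, then the Bierstedt/Floret--Wloka duality theorem), whereas you use Lemma~\ref{OGstrt} together with K\"othe's sum/product duality --- which is exactly the device the paper itself employs in Lemma~\ref{prfOO}, so this is a variation within the same argument rather than a different approach.
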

\begin{proof}
Just as in the proof of Lemma~\ref{OGstrt}, we represent $G$ as the projective limit of a sequence   $(G_n)$ of
finite groups and, correspondingly, represent $\mathcal{O}(G)$  as the inductive limit of the sequence  $(\mathbb{C}^{G_n})$ of
finite-dimensional linear spaces. It can readily be seen that the linking mappings are homomorphisms of Hopf $\mathbin{\widehat{\otimes}}$-algebras. In particular, the family $(\mathbb{C}^{G_n}\to \mathcal{O}(G))$ is a cone in the category of $\mathbin{\widehat{\otimes}}$-coalgebras. Applying the functor of strong dual, we obtain the cone  $(\mathcal{O}(G)'\to \mathbb{C} G_n)$ in the category
of $\mathbin{\widehat{\otimes}}$-algebras (taking into account that $(\mathbb{C}^{G_n})'\cong \mathbb{C} G_n$).  Being finite dimensional, $\mathbb{C} G_n$ can be regarded
as Banach algebras (for example, by providing them with $\ell_1$-norms). Denote by $A$ the Arens--Michael
algebra which is the projective limit of the sequence $(\mathbb{C} G_n)$.  The above cone induces a homomorphism of $\mathbin{\widehat{\otimes}}$-algebras $\mathcal{O}(G)'\to A$.  It remains to show that this homomorphism is an isomorphism of $\mathbin{\widehat{\otimes}}$-algebras.
It can readily be seen that, to this end, it suffices to establish that this homomorphism is a topological
isomorphism of locally convex spaces.

The inductive system $(\mathbb{C}^{G_n})$ is regular in the following sense: every bounded subset of $\mathcal{O}(G)$ is
bounded in some $(\mathbb{C}^{G_n})$ (see the definition in \cite[Sec.\,1, p.~46, Definition~2]{Bi86} or \cite[Sec.\,23, Sec.\,5.2, p.\,123]{FW68}).  Indeed, since all spaces are finite-dimensional, it follows that the system is strict, and one can apply \cite[Sec.\,24,  p.\,127, Satz\,2.2]{FW68}. The regularity implies that there is a topological isomorphism
between the strong dual to the inductive limit and the projective limit of strong duals \cite[Sec.\,2, p.\,57, Proposition~1]{Bi86} (see, e.g., a proof in \cite[Sec.\,25, p.\,145, Satz\,2.1]{FW68}).  Since the first space is $\mathcal{O}(G)'$ and
the other is $A$, this means that  $\mathcal{O}(G)'\to A$ is a topological isomorphism, as required.
\end{proof}

In conclusion, we prove our second main result, which is similar to Theorem~\ref{lf}.

\begin{thm}\label{prf}
Let $G$ be a profinite group with countable base. Then the $\mathbin{\widehat{\otimes}}$-algebras $\mathcal{O}(G)$ and $\mathcal{O}(G)'$  are holomorphically dual to each other and thus are holomorphically reflexive.
\end{thm}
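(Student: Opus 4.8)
The plan is to mirror the structure of the proof of Theorem~\ref{lf}, using the auxiliary results established above to show that the Arens--Michael envelope functor acts trivially on both $\mathcal{O}(G)$ and $\mathcal{O}(G)'$, so that the holomorphic dual $H^\bullet = (H')\sphat$ reduces to the ordinary strong dual in each case. First I would observe that $\mathcal{O}(G)$ is a well-behaved Hopf $\mathbin{\widehat{\otimes}}$-algebra by Proposition~\ref{OGHopf}, so the definition of holomorphic duality applies to it. Then I would compute $\mathcal{O}(G)^\bullet = (\mathcal{O}(G)')\sphat$. By Proposition~\ref{OGprAM}, the strong dual $\mathcal{O}(G)'$ is already an Arens--Michael algebra, so the envelope functor does nothing: $(\mathcal{O}(G)')\sphat \cong \mathcal{O}(G)'$ as a $\mathbin{\widehat{\otimes}}$-algebra. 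Hence $\mathcal{O}(G)^\bullet \cong \mathcal{O}(G)'$, which says that $\mathcal{O}(G)'$ is holomorphically dual to $\mathcal{O}(G)$.

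For the reverse direction I would start from the identification $\mathcal{O}(G)' \cong A$, the projective limit of the Banach algebras $(\mathbb{C} G_n)$, coming from Proposition~\ref{OGprAM}, and compute $\mathcal{O}(G)^{\bullet\bullet} = (\mathcal{O}(G)'')\sphat$. The essential point here is that the functor $H \mapsto H'$ is an involution on the relevant spaces: since $\mathcal{O}(G)$ carries the strongest locally convex topology and is a nuclear (DF)-space, and $\mathcal{O}(G)'$ is the corresponding nuclear Fr\'echet space, the natural map $\mathcal{O}(G) \to \mathcal{O}(G)''$ is a topological isomorphism (reflexivity of nuclear Fr\'echet and (DF)-spaces). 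Thus $\mathcal{O}(G)'' \cong \mathcal{O}(G)$, and since $\mathcal{O}(G)$ is already an Arens--Michael algebra by Proposition~\ref{OGAM}, applying the envelope functor again does nothing, giving $\mathcal{O}(G)^{\bullet\bullet} \cong \mathcal{O}(G)$. This establishes that $\mathcal{O}(G)$ is holomorphically dual to $\mathcal{O}(G)'$ and simultaneously that the biduality map $\mathcal{O}(G)^{\bullet\bullet} \to \mathcal{O}(G)$ is a topological isomorphism, i.e.\ reflexivity.

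The main obstacle I expect is not any single computation but rather the bookkeeping needed to confirm that all identifications respect the full Hopf $\mathbin{\widehat{\otimes}}$-algebra structure, not merely the underlying locally convex spaces. The space-level isomorphisms follow from the duality between sums and products and the compatibility of $\mathbin{\widehat{\otimes}}$ with these (as used in Lemma~\ref{prfOO}), but I must check that comultiplication, counit, and antipode are carried correctly under dualization and under the envelope. Here the key leverage is that for well-behaved Hopf $\mathbin{\widehat{\otimes}}$-algebras the dual $H'$ is automatically a Hopf $\mathbin{\widehat{\otimes}}$-algebra with the dually-defined operations, and that the Arens--Michael envelope preserves the Hopf structure by \cite[Proposition~6.7]{Pir_stbflat}. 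The cleanest way to organize this is to exploit the projective-limit presentation $\mathcal{O}(G)' \cong \varprojlim \mathbb{C} G_n$ together with its dual inductive presentation $\mathcal{O}(G) \cong \varinjlim \mathbb{C}^{G_n}$, where at each finite level the duality $(\mathbb{C}^{G_n})' \cong \mathbb{C} G_n$ is the standard finite-dimensional Hopf duality, and then argue that passing to the (co)limits and applying the strong-dual functor commutes with all structure maps.

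Concretely, the proof would read as follows.

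\begin{proof}
By Proposition~\ref{OGHopf}, $\mathcal{O}(G)$ is a well-behaved Hopf $\mathbin{\widehat{\otimes}}$-algebra, so its holomorphic dual $\mathcal{O}(G)^\bullet=(\mathcal{O}(G)')\sphat$ is defined. By Proposition~\ref{OGprAM}, the strong dual $\mathcal{O}(G)'$ is already an Arens--Michael algebra, and hence the Arens--Michael envelope functor acts trivially on it. Therefore $\mathcal{O}(G)^\bullet\cong \mathcal{O}(G)'$ as a Hopf $\mathbin{\widehat{\otimes}}$-algebra; that is, $\mathcal{O}(G)'$ is holomorphically dual to $\mathcal{O}(G)$.

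Conversely, since $\mathcal{O}(G)$ carries the strongest locally convex topology (Lemma~\ref{OGstrt}), it is a nuclear (DF)-space and its strong dual $\mathcal{O}(G)'$ is a nuclear Fr\'echet space; both are reflexive as locally convex spaces, so the canonical map $\mathcal{O}(G)\to \mathcal{O}(G)''$ is a topological isomorphism compatible with the Hopf structure. As $\mathcal{O}(G)$ is an Arens--Michael algebra by Proposition~\ref{OGAM}, the envelope functor acts trivially on it, and thus $(\mathcal{O}(G)')^\bullet=(\mathcal{O}(G)'')\sphat\cong \mathcal{O}(G)$ as a Hopf $\mathbin{\widehat{\otimes}}$-algebra. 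Hence $\mathcal{O}(G)$ is holomorphically dual to $\mathcal{O}(G)'$, and the two algebras are holomorphically dual to each other. In particular, the biduality homomorphism $\mathcal{O}(G)^{\bullet\bullet}\to \mathcal{O}(G)$ coincides with the canonical isomorphism above, so it is a topological isomorphism, which means that $\mathcal{O}(G)$ and $\mathcal{O}(G)'$ are holomorphically reflexive.
\end{proof}
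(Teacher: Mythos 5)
Your proposal is correct and follows essentially the same route as the paper's own proof: both halves rest on the same pillars, namely Proposition~\ref{OGprAM} to kill the envelope on $\mathcal{O}(G)'$ (giving $\mathcal{O}(G)^\bullet\cong\mathcal{O}(G)'$), the well-behavedness of $\mathcal{O}(G)'$ as a nuclear Fr\'echet space, the reflexivity $\mathcal{O}(G)''\cong\mathcal{O}(G)$ coming from the strongest locally convex topology (Lemma~\ref{OGstrt}), and Proposition~\ref{OGAM} to kill the envelope on $\mathcal{O}(G)$. Your extra remarks on Hopf-structure bookkeeping are handled in the paper implicitly by Proposition~\ref{OGHopf} and \cite[Proposition~6.7]{Pir_stbflat}, so they add caution but no new content.
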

\begin{proof}
Since $\mathcal{O}(G)'$ is an Arens--Michael algebra (according to Proposition~\ref{OGprAM}),  we see that $\mathcal{O}(G)^\bullet=\mathcal{O}(G)'$. Then $\mathcal{O}(G)'$ is a nuclear Fr\'echet space being the strong dual to a nuclear (DF)-space, and hence a well-behaved Hopf   $\mathbin{\widehat{\otimes}}$-algebra. Thus, the homomorphism of Hopf $\mathbin{\widehat{\otimes}}$-algebras
$$(\mathcal{O}(G)')^{\bullet}\to \mathcal{O}(G)$$
is well defined. Further, since any space with the strongest locally convex topology is reflexive in the
sense of the strong dual, it follows that  $\mathcal{O}(G)''\cong \mathcal{O}(G)$. Finally, $\mathcal{O}(G)$ is an Arens--Michael algebra by
Proposition~\ref{OGAM},  and thus $(\mathcal{O}(G)')^{\bullet}\to \mathcal{O}(G)$  is an isomorphism.
\end{proof}
Thus, if $G$ is a profinite group with countable base, then the reflexivity diagram becomes
$$
  \xymatrix{
\mathcal{O}(G) \ar@{|->}[d]&&\mathcal{O}(G)\ar@{|->}[ll]_{AM\,env}\\
\mathcal{O}(G)' \ar@{|->}[rr]_{AM\,env}&&\mathcal{O}(G)'\,.\ar@{|->}[u]
 }
$$

\begin{rem}\label{functsp}
If $G$ is an Abelian group, then $\widehat{\mathbb{C} G}$ is a commutative Arens–Michael algebra. Applying
the Gelfand transformation, we can identify this algebra with the algebra of functions on the spectrum,
i.e., on the set of continuous algebraic characters (the Gelfand transformation is injective, since $\widehat{\mathbb{C} G}$ is a dense subalgebra in the semisimple Banach algebra $L^1(G)$). It can readily be seen that the
resulting group is just~$G^\bullet$. If, in addition, G is finitely generated, then $\widehat{\mathbb{C} G}$ is a holomorphically finitely
generated Hopf algebra in the sense of \cite{AHHFG}.  It follows from the commutativity of $\widehat{\mathbb{C} G}$ that $G^\bullet$  has a canonical Lie group structure \cite[Theorem~2.2]{AHHFG}.  A similar assertion holds also in the more general case of compactly generated Lie groups \cite[Theorem~3.1]{AHHFG}, which allows one to give a canonical form to
Akbarov's construction in~\cite[теорема~7.1]{Ak08}.

If $G$ is Abelian and locally finite, then, in a similar way, the Gelfand transformation gives us not
only a group $G^\bullet$, but also the algebra of functions on $G^\bullet$.
Thus, transposing $G$ and $G^\bullet$,
we can use the
formula of Lemma~\ref{proGbu} as the definition of the algebra of holomorphic functions on a profinite (Abelian)
group and obtain the definition accepted above as its consequence (recall that $\widehat{\mathbb{C} G}\cong \mathbb{C} G$ according to
Proposition~\ref{OGAM}).
\end{rem}

The author is grateful to S.\,S.~Akbarov for useful remarks that improved the quality of the presentation.


\begin{thebibliography}{1}

\bibitem{Ak08}
S.\,S.~Akbarov, \emph{Holomorphic functions of exponential type and duality for Stein groups with algebraic
connected component of identity},   J. Math. Sci., 162:4 (2009), 459--586.

\bibitem{Ar20+}
O.\,Yu.~Aristov, \emph{On holomorphic reflexivity conditions for complex Lie groups}, Proc. Edinb. Math. Soc., 64:4 (2021), 800--821,  arXiv:2002.03617.

\bibitem{Ar20a}
O.\,Yu.~Aristov, \emph{Arens-Michael envelopes of nilpotent Lie algebras, holomorphic functions of exponential type, and homological epimorphisms}, Trans. Moscow Math. Soc. 2020, 97--114,  arXiv:1810.13213.

\bibitem{Pir_stbflat}
A.\,Yu.~Pirkovskii, \emph{Stably flat completions of universal enveloping algebras},
Dissertationes Math. (Rozprawy Math.) 441(2006), 1--60.

\bibitem{AHHFG}
O.\,Yu.~Aristov, \emph{Holomorphically finitely generated Hopf algebras and quantum Lie groups}, arXiv:2006.12175 (2020).

\bibitem{Ak20+}
S.\,S.~Akbarov, \emph{Holomorphic duality for countable discrete groups}, 	arXiv:2009.03372, (2020).

\bibitem{BFGP}
P.~Bonneau, M.~Flato, M.~Gerstenhaber, G.~Pinczon,  \emph{The hidden group
structure of quantum groups: strong duality, rigidity and preferred
deformations}, Comm. Math. Phys. 161 (1994), 125--156.

\bibitem{Ar21+}
O.\,Yu.~Aristov, \emph{An analytic criterion for the local finiteness of a countable semigroup}, Sib. Math. J. 63:3,
421--424 (2022), arXiv:2104.03230.

\bibitem{RZ10}
L.~Ribes, P.~Zalesskii, \emph{Profinite groups}, 2nd ed., Springer, Berlin, 2010.

\bibitem{HM07}
K.\,H.~Hofmann,  S.\,A.~Morris,
\emph{The Lie Theory of connected pro-Lie groups}, European Mathematical Society, Z\"{u}rich, 2007.

\bibitem{HM15}
K.\,H.~Hofmann,  S.\,A.~Morris,
\emph{Pro-Lie groups: A survey with open problems}, Axioms 4 (2015), 294--312.

\bibitem{Ch}
K.~Choiy, \emph{A note on the image of continuous homomorphisms of
locally profinite groups}, https://www.math.purdue.edu/$\sim$tongliu/teaching/598/p-adicrep.pdf.

\bibitem{Ca95}
B.~Casselman, \emph{Introduction to admissible representations of p-adic groups}, unpublished notes, https://secure.math.ubc.ca/$\sim$cass/research/pdf/Smooth.pdf, 1995.

\bibitem{Br61}
F.~Bruhat,  \emph{Distributions sur un groupe localement compact et applications a l'etude des
representations des groupes $p$-adique}, Bull. Soc. Math. France 89 (1961), 43--75.

\bibitem{Lit}
G.\,L.~Litvinov, \emph{Group representations in locally convex spaces, and topological group
algebras}, Selecta Math. Soviet. (Birkh\"{a}user Publ.) 7 (1988), no.~2, 101--182.

\bibitem{Pl19}
S.\,S.~Platonov, \emph{Spectral synthesis on zero-dimensional locally compact abelian groups}, Russian Universities Reports. Mathematics,  24:128 (2019), 450--456  [in Russian].

\bibitem{Sc71}
H.\,H.~Schaefer, \emph{Topological vector spaces}, Springer-Verlag, New York–Berlin, 1971.

\bibitem{HW04}
T.~Heintz, J.~Wengenroth, \emph{Inductive limits of locally m-convex algebras}, Bull. Belg. Math. Soc. Simon Stevin 11:1 (2004), 149--152.

\bibitem{AN96}
M.~Akkar, C.~Nacir, \emph{Structure m-convexe d’une algebre limite inductive localement convexe d’algebres de Banach}, Rend. Sem. Mat. Univ. Padova 95 (1996),
107--126.

\bibitem{Kot1}
G.~K\"{o}the, \emph{Topological vector spaces I},  Springer, New York, 1969.

\bibitem{Kot2}
G.~K\"{o}the, \emph{Topological vector spaces II}, Springer, New York--Heidelberg--Berlin  1979.

\bibitem{Bi86}
K.\,D.~Bierstedt,  \emph{An introduction to locally convex inductive limits}, Functional
Analysis and its applications, Lecture Notes I.C.P.A.M., Nice (1986), 35--133.

\bibitem{FW68}
K.~Floret, J.~Wloka, \emph{Einf\"{u}hrung in die Theorie der lokalkonvexen R\"{a}ume}, Lecture Notes in Math.~56, Springer, Berlin--Heidelberg 1968.
\end{thebibliography}
\end{document}